\newtheorem{theorem}{Theorem}[section]
\newtheorem{lemma}[theorem]{Lemma}
\theoremstyle{definition}
\theoremstyle{proposition}
\newtheorem{proposition}[theorem]{Proposition}
\newtheorem{remark}[theorem]{Remark}
\theoremstyle{corollary}
\numberwithin{equation}{section}
\begin{document}

\title{A generalization of Riemann's theta functions for singular curves}

\author{Yukitaka \textsc{Abe}}
\address{Department of Mathematics, University of Toyama, Toyama 930-8555, Japan}
\email{abe@sci.u-toyama.ac.jp}


\subjclass[2010]{Primary 32N10;
Secondary 30F99, 14K20}

\keywords{generalized theta functions, Riemann's constants,
Jacobi inversion problem}


\begin{abstract}
Let $X$ be a compact Riemann surface of genus $g$.
Jacobi's inversion theorem states that the Abel-Jacobi map
$\varphi : X^{(g)} \longrightarrow J(X)$ is surjective, where
$X^{(g)}$ is the symmetric product of $X$ of degree $g$
and $J(X)$ is the Jacobi variety of $X$. Riemann obtained
the explicit solution of the Jacobi inversion problem
introducing Riemann's theta functions.
We study such a problem for singular curves. We define
a generalization of Riemann's theta functions and Riemann's constants.
We obtain similar results for singular curves.
\end{abstract}

\maketitle

\section{Introduction}

Let $X$ be a compact Riemann surface of genus $g$. A basis of the space of holomorphic
1-forms on $X$ gives the Abel-Jacobi map $\varphi : X^{(g)} \longrightarrow J(X)$,
where $X^{(g)}$ is the symmetric product of $X$ of degree $g$ and $J(X)$ is the Jacobi
variety of $X$. We know that it is biholomorphic. Moreover, Riemann obtained the
explicit solution of the Jacobi inversion problem introducing Riemann's theta functions.
\par
In this paper we generalize the above argument to singular curves. We constructed
generalized Jacobi varieties of singular curves completely analytically, and established
fundamental properties in \cite{ref2} and \cite{ref5}.
We treat only singular curves with an ordinal double point 
to avoid complicated expression.
However we think that the method is valid for general singular curves.
\par
The paper is organized as follows.
In Section 2 we recall some fundamental properties of singular curves and
their analytic Albanese varieties. We also give a remark on the Remmert-Morimoto
decomposition of commutative complex Lie groups and the meromorphic
function fields on their standard compactifications. 
In Section 3 we state the precise setting in this paper. We define a
generalized theta function in Section 4. We use it through the paper.
In Section 5 we generalize Riemann's constants and solve explicitly the Jacobi inversion
problem for singular curves (Theorem 5.1). In the last section, we show that
the image of a singular curve excluded a small neighbourhood of a singular
point is contained in the zero set of a generalized theta function (Theorem 6.6).

\section{A remark on analytic Albanese varieties}
We shall review here some fundamental properties of singular curves and their analytic
Albanese varieties. We refer to \cite{ref2} for details.\par
Let $X$ be a compact Riemann surface with the structure sheaf ${\mathcal O}_{X}$.
Take a finite subset $S$ of $X$. We consider an equivalence relation $R$ on $S$.
We define the quotient set $\overline{S} := S/R$ of $S$ by $R$.
We set
\[\overline{X} := (X \setminus S)\cup \overline{S}.\]
We induce to $\overline{X}$ the quotient topology
by the canonical projection $\rho : X \longrightarrow \overline{X}$.
Then $\overline{X}$ is a compact Hausdorff space.
According to Serre (\cite{ref6}), we define
a modulus ${\mathfrak m}$ with support $S$ by the data of an
integer ${\mathfrak m}(P)>0$ for each point $P \in S$.
Let $\rho _* {\mathcal O}_X$ be the direct image of
${\mathcal O}_X$ by the projection
$\rho$. For any $Q \in \overline{S}$ we denote by ${\mathcal I}_Q$
the ideal of $(\rho _* {\mathcal O}_X)_Q$ formed by germs of
functions $f$ with ${\rm ord}_P(f) \geq {\mathfrak m}(P)$ for
any $P \in \rho ^{-1}(Q)$.
We define a sheaf ${\mathcal O}_{\mathfrak m}$ on $\overline{X}$ by
\[
{\mathcal O}_{{\mathfrak m} , Q} :=
\begin{cases}
(\rho _*{\mathcal O}_X)_Q = {\mathcal O}_{X, Q}&
\mbox{if $Q \in X \setminus S$},\\
{\mathbb C} + {\mathcal I}_Q&
\mbox{if $Q \in \overline{S}$}.
\end{cases}\]
Then we obtain a 1-dimensional compact reduced complex
space $(\overline{X}, {\mathcal O}_{\mathfrak m})$, which
we denote by $X_{{\mathfrak m}}$.
\par
Conversely, any reduced and irreducible singular curve is obtained
as above.\par
Let $g$ be the genus of $X$. For any $Q \in X_{{\mathfrak m}}$ we set
$\delta _{Q} := \dim \left((\rho _{*}{\mathcal O}_X)_{Q}/{\mathcal O}_{{\mathfrak m}, Q}\right)$.
The genus of $X_{{\mathfrak m}}$ is defined by $\pi := g + \delta$,
where
$\delta := \sum_{Q \in X_{{\mathfrak m} }} \delta _Q $.
We denote by $\Omega _{{\mathfrak m}}$ the duality sheaf on $X_{{\mathfrak m}}$.
We have
\[\dim H^0(X_{{\mathfrak m}}, \Omega _{{\mathfrak m}}) = \dim
H^1(X_{{\mathfrak m}}, {\mathcal O}_{{\mathfrak m}}) = \pi .\] 
\par
We take a basis $\{ \omega _1, \dots , \omega _{\pi }\}$ of
$H^0(X_{{\mathfrak m}}, \Omega _{{\mathfrak m}})$.
Let $\{ \alpha _1, \beta _1, \dots , \alpha _g,
\beta _g \}$ be a canonical homology basis of $X$. 
We set $S = \{ P_1, \dots , P_s\}$. We consider a small
circle $\gamma _j$ centered at $P_j$ with anticlockwise direction for
$j = 1, \dots ,s$. Then the set $\{ \alpha _1, \beta _1, \dots , \alpha _g,
\beta _g, \gamma _1, \dots , \gamma _{s-1}\}$ forms a basis of
$H_1(X \setminus S, {\mathbb Z}) = H_1(X_{{\mathfrak m}}\setminus
\overline{S}, {\mathbb Z})$. Let $H^0(X_{\mathfrak m},\Omega _{\mathfrak m})^{*}$ be the dual space of
$H^0(X_{\mathfrak m},\Omega _{\mathfrak m})$. We set
\[A := H^0(X_{\mathfrak m},\Omega _{\mathfrak m})^{*}/
H_1(X_{\mathfrak m} \setminus \overline{S},{\mathbb Z}).\]
Consider $2g + s -1$ vectors
\[\left(\int _{\alpha _i}\rho ^{*}\omega _1, \dots
, \int _{\alpha _i}\rho ^{*}\omega _{\pi }\right),\quad i = 1, \dots , g,\]
\[\left(\int _{\beta _i}\rho ^{*}\omega _1, \dots
, \int _{\beta _i}\rho ^{*}\omega _{\pi }\right),\quad i = 1, \dots , g\]
and
\[\left(\int _{\gamma _j}\rho ^{*}\omega _1, \dots
, \int _{\gamma _j}\rho ^{*}\omega _{\pi }\right),\quad j = 1, \dots , s-1\]
in ${\mathbb C}^{\pi}$.  Let $\Gamma $ be a subgroup of ${\mathbb C}^{\pi}$
generated by these vectors over ${\mathbb Z}$.
We have $A \cong {\mathbb C}^{\pi }/\Gamma $ as a complex Lie group.
We call $A$ with the structure as a complex Lie group the analytic Albanese
variety of $X_{{\mathfrak m}}$, and write it as
${\rm Alb}^{an}(X_{{\mathfrak m}})$.
We define a period map $\varphi $ with base point
$P_0 \in X \setminus S$ by 
\[\varphi : X \setminus S \longrightarrow
A,\quad
P \longmapsto \left[ \left( \int _{P_0}^{P}\rho ^{*} \omega _1, \dots ,
\int _{P_0}^{P}\rho ^{*} \omega _{\pi}\right)\right].\]
This is the Abel-Jacobi map in the case of singular curves.
The map $\varphi $ is extended to a bimeromorphic map
$\varphi : (X \setminus S)^{(\pi )} \longrightarrow A$
(Theorem 5.19 in \cite{ref2}).\par
We give a remark on the canonical form of connected commutative
complex Lie groups. Let ${\mathbb C}^n/\Gamma $ be a connected
commutative complex Lie group. By the theorem of Remmert-Morimoto
we have the canonical form 
\[{\mathbb C}^n/\Gamma \cong {\mathbb C}^p \times ({\mathbb C}^{*})^q
\times G,\]
where $G$ is a toroidal group of dimension $r$ and $n = p+q+r$.
The complex dimension of the complex linear subspace spanned by 
$\Gamma $ is said to be the complex rank of $\Gamma $. We denote
it by ${\rm rank}_{{\mathbb C}}\Gamma $. Obviously,
${\rm rank}_{{\mathbb C}}\Gamma = q + r$ in the above case. Then
${\mathbb C}^n/\Gamma = {\mathbb C}^p \times ({\mathbb C}^{q+r}/\Gamma )$ and ${\mathbb C}^{q+r}/\Gamma \cong ({\mathbb C}^{*})^q \times G$. \par
We consider $X = {\mathbb C}^n/\Gamma $ with ${\rm rank}_{{\mathbb C}} = n$. We denote by ${\mathbb R}_{\Gamma }^{n+m}$ the real linear subspace
generated by $\Gamma $. An $m$-dimensional complex linear subspace
${\mathbb C}_{\Gamma }^{m} := {\mathbb R}_{\Gamma }^{n+m} \cap \sqrt{-1}
{\mathbb R}_{\Gamma }^{n+m}$ is the maximal complex linear subspace contained in ${\mathbb R}_{\Gamma }^{n+m}$. We can always take a period
matrix $P$ of $X$ as follows:
\begin{equation}
P = 
\begin{pmatrix}
0 & I_m & T \\
I_{n-m} & R_1 & R_2\\
\end{pmatrix}\quad
\text{with $\det ( {\mathfrak Im}(T)) \not= 0$},
\end{equation}
where $I_k$ is the unit matrix of degree $k$ and $R_1$ and $R_2$ are real matrices.

\begin{remark}
$X = {\mathbb C}^n/\Gamma $ is a toroidal group if and only if the real 
$(n-m, 2m)$-matrix $(R_1\quad R_2)$ satisfies the irrationality condition (for example, see Proposition 2.2.2 in \cite{ref3}).
\end{remark}

If the irrationality condition is not satisfied, then we have the Remmert-Morimoto decomposition
\[X \cong ({\mathbb C}^{*})^q \times Y,\]
where $q \geq 1$ and $Y = {\mathbb C}^r/\Lambda $ is a toroidal group.
In this case, we have $n = q + r$ and ${\rm rank}\; \Lambda = r + m$.
Let $\sigma _X : {\mathbb C}^n \longrightarrow X$ be the projection. We set
$B := ({\mathbb C}^{*})^q \times Y = {\mathbb C}^{q+r}/({\mathbb Z} \oplus \Lambda )$. We also have the projection $\sigma _B : {\mathbb C}^n \longrightarrow B$. There exists an isomorphism $\varphi : X \longrightarrow B$. If $\Phi : {\mathbb C}^n \longrightarrow {\mathbb C}^n$ is the linear extension of $\varphi $, then $\Phi ({\mathbb C}_{\Gamma }^m) = {\mathbb C}_{\Lambda }^m$ and the following diagram is commutative:
\[
\begin{CD}
{\mathbb C}^n @>\Phi >> {\mathbb C}^n \\
@V\sigma _X VV           @VV\sigma _B V\\
X @>> \varphi > B
\end{CD}\]
\par
Let $\Lambda _{{\mathbb T}}$ be the discrete subgroup of ${\mathbb C}_{\Gamma }^m$ 
with ${\rm rank}\; \Lambda _{{\mathbb T}} = 2m$ 
generated by column vectors of the $(m,2m)$-matrix $(I_m\quad T)$ in (2.1). 
Then ${\mathbb T} := {\mathbb C}_{\Gamma }^m/\Lambda _{{\mathbb T}}$ is an $m$-dimensional complex torus.
From the projection $\mu : {\mathbb C}^n \longrightarrow {\mathbb C}_{\Gamma }^m$ we obtain a principal $({\mathbb C}^{*})^{n-m}$-bundle $\tau : X \longrightarrow {\mathbb T}$. Replacing fibers $({\mathbb C}^{*})^{n-m}$ with $({\mathbb P}^1)^{n-m}$, we have the associated $({\mathbb P}^1)^{n-m}$-bundle $\overline{\tau } : \overline{X} \longrightarrow {\mathbb T}$. A toroidal group is said to be a quasi-abelian variety if it satisfies generalized Riemann conditions. We refer to \cite{ref3} for the definition of the kind of a quasi-abelian variety and other properties. In the above situation we have the following lemma.

\begin{lemma}
If ${\mathbb T}$ is an abelian variety, then $Y$ is a quasi-abelian variety of kind 0.
\end{lemma}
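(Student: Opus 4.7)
The plan is to identify $\mathbb T$ with an analogous $m$-torus $\mathbb T_Y$ intrinsic to $Y$, and then transport a polarization of $\mathbb T_Y$ back up to a semi-positive Hermitian form on $\mathbb C^r$ of the shape required for kind $0$.

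First, I would apply the canonical form (2.1) to $Y = \mathbb C^r/\Lambda$, whose type is $(r,m)$ since $\mathrm{rank}\,\Lambda = r + m$. This yields a period matrix
\[
P_Y = \begin{pmatrix} 0 & I_m & T_Y \\ I_{r-m} & R_1' & R_2' \end{pmatrix}, \qquad \det {\mathfrak Im}(T_Y) \neq 0,
\]
a maximal complex subspace $\mathbb C_\Lambda^m \subset \mathbb C^r$, the projection $\mu_Y: \mathbb C^r \to \mathbb C_\Lambda^m$ onto the final $m$ coordinates of this basis, and the $m$-torus $\mathbb T_Y := \mathbb C_\Lambda^m/\Lambda_{Y,\mathbb T}$ where $\Lambda_{Y,\mathbb T} = \mu_Y(\Lambda)$ is generated by the columns of $(I_m\;T_Y)$.

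Next, I would verify that the linear extension $\Phi$ of $\varphi$ induces an isomorphism $\overline\Phi : \mathbb T \to \mathbb T_Y$ of complex tori. The maximal complex subspace of the real span of the period lattice $\mathbb Z^q \oplus \Lambda$ of $B$ is $\{0\} \oplus \mathbb C_\Lambda^m$, because $\mathbb R^q \cap i\mathbb R^q = \{0\}$; thus $\mathbb C_B^m = \mathbb C_\Lambda^m$ and $\mu_B$ restricted to the $Y$-factor coincides with $\mu_Y$. Since $\Phi(\mathbb C_\Gamma^m) = \mathbb C_\Lambda^m$ and $\Phi(\Gamma) = \mathbb Z^q \oplus \Lambda$, chasing around the lattice quotient gives $\Phi(\Lambda_\mathbb T) = \mu_B(\Phi(\Gamma)) = \mu_Y(\Lambda) = \Lambda_{Y,\mathbb T}$, so $\Phi$ descends. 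Consequently, if $\mathbb T$ is an abelian variety then so is $\mathbb T_Y$, and there exists a positive-definite Hermitian form $H_0$ on $\mathbb C_\Lambda^m$ whose imaginary part takes integer values on $\Lambda_{Y,\mathbb T}$.

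Finally, I would transport $H_0$ to a Hermitian form on $\mathbb C^r$ via
\[
H(v,w) := H_0(\mu_Y(v), \mu_Y(w)).
\]
This $H$ is semi-positive with null space $\ker \mu_Y$ of complex dimension $r - m$, and ${\mathfrak Im}(H)$ is integer-valued on $\Lambda$ because ${\mathfrak Im}(H_0)$ is integer-valued on $\mu_Y(\Lambda) = \Lambda_{Y,\mathbb T}$. Since $H|_{\mathbb C_\Lambda^m} = H_0$ is positive definite, $H$ is a polarization exhibiting $Y$ as a quasi-abelian variety of kind $0$ in the sense of \cite{ref3}.

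I expect the main obstacle to be making the identification $\mathbb T \cong \mathbb T_Y$ fully rigorous: the projections $\mu$ and $\mu_B$ depend on the canonical bases chosen for $X$ and $B$, and $\Phi$ need not intertwine them as linear maps, so the descent should be argued at the level of lattice images, or by observing that $\mathbb T$ is intrinsic to $X$ as a complex Lie group (as the base of the $(\mathbb C^*)^{n-m}$-bundle $\tau$) and hence transports via any isomorphism. Once this is settled, matching the constructed $H$ to the precise definition of ``kind $0$'' in \cite{ref3} is a direct unpacking of definitions.
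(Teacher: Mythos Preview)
Your proposal is correct and rests on the same core idea as the paper---pull back a polarization from the base torus to obtain the required semi-positive Hermitian form---but the execution differs in a way worth noting. You introduce an auxiliary torus $\mathbb{T}_Y$ intrinsic to $Y$, argue that $\mathbb{T}\cong\mathbb{T}_Y$, transfer the polarization to $\mathbb{T}_Y$, and then pull back via $\mu_Y:\mathbb{C}^r\to\mathbb{C}_\Lambda^m$. The paper bypasses the identification $\mathbb{T}\cong\mathbb{T}_Y$ entirely: it pulls the ample Riemann form $\mathcal{H}_{\mathbb{T}}$ directly back to $\mathbb{C}^n$ via $\mu$, transports the result to the $B$-side by precomposing with $\Phi^{-1}\times\Phi^{-1}$, and only then restricts to $\mathbb{C}^r\times\mathbb{C}^r$. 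Because $\Phi(\mathbb{C}_\Gamma^m)=\mathbb{C}_\Lambda^m$ and $\Phi(\Gamma)=\mathbb{Z}^q\oplus\Lambda$, positive-definiteness on $\mathbb{C}_\Lambda^m$ and integrality on $\Lambda\times\Lambda$ follow immediately, with no need to compare the two projections $\mu$ and $\mu_B$. This sidesteps exactly the obstacle you flag: the paper never needs $\Phi$ to intertwine $\mu$ with $\mu_B$, because it works with the single projection $\mu$ on the $X$-side and lets $\Phi^{-1}$ do the transport. Your route is conceptually pleasant in that it exhibits the polarization as living naturally on $Y$'s own base torus, but the paper's is shorter and avoids the bookkeeping you anticipate.
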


\begin{proof}
There exists an ample Riemann form ${\mathcal H}_{{\mathbb T}}$ for ${\mathbb T}$. We set ${\mathcal H}_0 := {\mathcal H}_{{\mathbb T}}\circ (\mu \times \mu )$. Then ${\mathcal H} := {\mathcal H}_0\circ (\Phi ^{-1} \times \Phi ^{-1})$ is a hermitian form on ${\mathbb C}^n$. By the definition, ${\mathcal H}$ is positive definite on ${\mathbb C}_{\Lambda }^m$ and the imaginary part ${\mathfrak Im}{\mathcal H}$ of ${\mathcal H}$ is ${\mathbb Z}$-valued on $\Lambda \times \Lambda$. Since ${\mathcal H}_{{\mathbb T}}$ is positive definite, we have ${\rm rank}\left( {\mathfrak Im}{\mathcal H}\right)_{{\mathbb R}_{\Lambda }^{r+m}} = 2m$, where $\left({\mathfrak Im}{\mathcal H}\right)_{{\mathbb R}_{\Lambda }^{r+m}}$ is the restriction of ${\mathfrak Im}{\mathcal H}$ onto ${\mathbb R}_{\Lambda }^{r+m} \times {\mathbb R}_{\Lambda }^{r+m}$. Therefore ${\mathcal H}|_{{\mathbb C}^r \times {\mathbb C}^r}$ is an ample Riemann form of kind 0 for $Y$.
\end{proof}

In the rest of this section, we assume that ${\mathbb T}$ is an abelian variety. Then $Y$ is a quasi-abelian variety of kind 0 by the above lemma. Let $\overline{Y}$ be the standard compactification of $Y$ (see \cite{ref2} or \cite{ref3}). Then $\overline{B} = ({\mathbb P}^1)^q \times \overline{Y}$ is the standard compactification of $B$. Let ${\rm Mer}(\overline{B})$ be the field of meromorphic functions on $\overline{B}$. We denote by ${\rm Mer}(\overline{B})|_B$ the restriction of ${\rm Mer}(\overline{B})$ onto $B$. Then
$\sigma _B^{*}\left( {\rm Mer}(\overline{B})|_B\right)$ is a {\rm W}-type subfield (\cite{ref4}). Similarly we define ${\rm Mer}(\overline{X})$ and ${\rm Mer}(\overline{X})|_X$. We have a representation of functions in ${\rm Mer}(\overline{B})$ (see Section 6 in \cite{ref1} or Section 6.7.3 in \cite{ref3}).
Any function in ${\rm Mer}(\overline{X})$ has a similar representation. The field $\sigma _X^{*}\left({\rm Mer}(\overline{X})|_X\right)$ is finitely generated over ${\mathbb C}$, non-degenerate and of transcendence degree $n$. Furthermore, it admits an algebraic addition theorem. Therefore it is isomorphic to a {\rm W}-type subfield (Theorem 2.5 in \cite{ref5}). Since the Remmert-Morimoto decomposition is unique, we obtain
\[
{\rm Mer}(\overline{X})|_X \cong {\rm Mer}(\overline{B})|_B.
\]
\par
This remark shows that instead of the canonical form, it is sufficient to consider ${\rm Mer}(\overline{X})|_X$ for $X = {\mathbb C}^n/\Gamma $ with period matrix (2.1) in the proof of Proposition 7.1 in \cite{ref2}.

\section{Singular curves of genus $\pi = 2$}
Let $X = {\mathbb C}/\Lambda $ be an elliptic curve with period matrix
$(1\quad \tau )$, where ${\mathfrak Im}(\tau ) > 0$. Take a subset
$S = \{ P_1, P_2\}$ of $X$ with $P_1 \not= P_2$. Let ${\mathfrak m}$ be a modulus with
support $S$ defined by ${\mathfrak m}(P_1) = {\mathfrak m}(P_2) = 1$.
The relation $R$ on $S$ is the identification of $P_1$ and $P_2$.
Then $\overline{S} := S/R = \{ Q\}$. We construct a singular curve
$X_{{\mathfrak m}}$ with an ordinal double point
$Q$ from ${\mathfrak m}$.
Let $\{ \alpha , \beta \}$ be a homology basis of $X$. We may assume that
closed curves $\alpha $ and $\beta $ do not pass through $P_1$ and $P_2$.
We take small circles $\gamma _1$ and $\gamma _2$ centered at $P_1$ and
$P_2$ with radii $\delta > 0$ and $\varepsilon > 0$ respectively.
Here these circles have anticlockwise direction. We consider a simply
connected domain $X^{\circ}$ obtained by cutting $X$ open along $\alpha $
and $\beta $. We set $X_S^{\circ} := X^{\circ} \setminus \{ P_1, P_2\}$.
Let $\rho : X \longrightarrow X_{{\mathfrak m}}$ be the projection.
Take a basis $\{ \omega , \eta \}$ of $H^0(X_{{\mathfrak m}}, \Omega _{{\mathfrak m}})$ such that
$\rho ^{*}\omega $ is a holomorphic 1-form on $X$. We may assume that
$\omega $ and $\eta $ are normalized as
\begin{equation}
\begin{pmatrix}
\int _{\gamma _1}\rho ^{*}\omega & \int _{\alpha }\rho ^{*}\omega & \int _{\beta }\rho ^{*}\omega \\
\int _{\gamma _1}\rho ^{*}\eta & \int _{\alpha }\rho ^{*}\eta & \int _{\beta }\rho ^{*}\eta 
\end{pmatrix}
=
\begin{pmatrix}
0 & 1 & \tau \\
1 & r_1 & r_2 \\
\end{pmatrix},
\end{equation}
where $r_1, r_2 \in {\mathbb R}$. Let $\Gamma $ be a discrete subgroup of ${\mathbb C}^2$
generated by the above three column vectors. The quotient group ${\mathbb C}^2/\Gamma $
is a toroidal group if and only if $(r_1\quad r_2)$ satisfies the irrationality condition.
When the irrationality condition is not satisfied, we have
${\mathbb C}^2/\Gamma \cong {\mathbb C}^{*} \times {\mathbb T}$, where ${\mathbb T}$ is a
1-dimensional complex torus. In both cases we may consider ${\mathbb C}^2/\Gamma $ as
an analytic Albanese variety of $X_{{\mathfrak m}}$ as shown in the previous section.
Then we set $A = {\rm Alb}^{an}(X_{{\mathfrak m}}) = {\mathbb C}^2/\Gamma $.
The matrix $(1\quad \tau )$ in (3.1) gives a complex torus ${\mathbb T}_{\tau }$. 
We note ${\mathbb T}_{\tau } = X$. The analytic Albanese variety $A$ has the structure of
principal ${\mathbb C}^{*}$-bundle $\sigma : A \longrightarrow {\mathbb T}_{\tau }$
over ${\mathbb T}_{\tau }$. Let $\overline{\sigma } : \overline{A} \longrightarrow {\mathbb T}_{\tau }$
be the associated ${\mathbb P}^1$-bundle. If $A$ is toroidal, then $\overline{A}$ is the
standard compactification of a quasi-abelian variety $A$ of kind 0. If $A$ is not
toroidal, then we have $A \cong {\mathbb C}^{*} \times {\mathbb T} =: B$.
The standard compactification of $B$ is $\overline{B} = {\mathbb P}^1 \times {\mathbb T}$.
As shown in the previous section, we have
${\rm Mer}(\overline{A})|_A \cong {\rm Mer}(\overline{B})|_B$.

\section{Theta functions}
We write ${\bf e}(x) := \exp (2\pi \sqrt{-1}x)$. We consider theta functions for ${\mathbb T}_{\tau }$.
We first define a theta function $\theta (z, \tau )$ by
\begin{equation}
\theta (z, \tau ) := \sum _{n \in {\mathbb Z}} {\bf e}\left(
\frac{1}{2} n^2 \tau + n z\right),\quad z \in {\mathbb C}.
\end{equation}
It satisfies the following equations
\begin{equation}
\left\{ \begin{aligned}
\theta (z + 1, \tau ) &= \theta (z, \tau ),\\
\theta (z + \tau , \tau ) &= {\bf e}\left( - \frac{1}{2}\tau - z\right) \theta (z, \tau ).
\end{aligned}\right.
\end{equation}
Next we define theta functions with characteristics. For any $a, b \in {\mathbb R}$ we define
\begin{equation}
\theta {a \brack b}(z, \tau ) := \sum _{n \in {\mathbb Z}} {\bf e}\left[
\frac{1}{2}(n + a)^2 \tau + (n + a)(z + b)\right].
\end{equation}
Then the function $\theta (z, \tau )$ in (4.1) is written as
$\theta (z, \tau ) = \theta {0 \brack 0}(z, \tau )$.
We have the translation relation
\begin{equation}
\theta {a \brack b}(z + p + q\tau , \tau ) = {\bf e}\left[ - \frac{1}{2}q^2 \tau - q(z + b) + ap\right]
\theta {a \brack b}(z, \tau )
\end{equation}
for any $p, q \in {\mathbb Z}$.\par
We define a theta factor $\rho _0 : \Lambda \times {\mathbb C} \longrightarrow {\mathbb C}^{*}$
by $\rho _0(1, z) = 1$ and $ \rho _0(\tau , z) = {\bf e}\left( - \frac{1}{2}\tau - z\right)$.
We denote by $AF(\rho _0)$ the set of all automorphic forms for $\rho _0$. Then we have
$\theta {0 \brack 0}(z, \tau ) \in AF(\rho _0)$. Since $\dim AF(\rho _0) = 1$, 
$AF(\rho _0)$ is generated by $\theta {0 \brack 0}(z, \tau )$.
We define an into isomorphism $\iota : \Lambda \longrightarrow \Gamma $ by
\begin{equation*}
\iota (\lambda ) := p 
\begin{pmatrix}
1\\
r_1
\end{pmatrix}
+ q \begin{pmatrix}
\tau \\
r_2
\end{pmatrix}
\end{equation*}
for any $\lambda = p + q \tau \in \Lambda $.  We take coordinates
$(z,w)$ of ${\mathbb C}^2$ which represent a period matrix of $\Gamma $ as (3.1).
For any $\gamma \in \Gamma $, $(\gamma _z, \gamma _w)$ is the representation of
$\gamma $ in these coordinates $(z,w)$. We have $\iota (\lambda )_w = pr_1 + qr_2$
for $\lambda = p + q\tau $. Therefore, we can define a homomorphism
$\psi : \Lambda \longrightarrow {\mathbb C}_1^{\times } = \{ \zeta \in {\mathbb C} ;
|\zeta | = 1\}$ by $\psi (\lambda ) := {\bf e}\left( \iota (\lambda )_w\right)$ for
$\lambda \in \Lambda $. For any $\alpha \in {\mathbb Z}$,
$\psi ^{-\alpha }\rho _0$ is also a theta factor. The space $AF(\psi ^{-\alpha }\rho _0)$
of all automorphic forms for $\psi ^{-\alpha }\rho _0$ has the same dimension
as $AF(\rho _0)$. If we set
\begin{equation}
f(z,w) = \frac{\sum _{\text{finite}} D_{\beta }(z) {\bf e}(w)^{\beta }}
{\sum _{\text{finite}} C_{\alpha }(z) {\bf e}(w)^{\alpha }}
\end{equation}
with $C_{\alpha }(z) \in AF(\psi ^{-\alpha }\rho _0)$ and $D_{\beta }(z) \in
AF(\psi ^{-\beta } \rho _0)$, then $f \in \sigma _A^{*}\left( {\rm Mer}(\overline{A})|_A\right)$
(Section 5 in \cite{ref1}, see also Section 6.7 in \cite{ref3}).
Using $\psi ^{-\alpha }(1) = {\bf e}(-\alpha r_1)$, $\psi ^{-\alpha }(\tau ) = {\bf e}(-\alpha r_2)$
and the translation relation of theta functions, we obtain
$\theta {-\alpha r_1 \brack \alpha r_2}(z, \tau ) \in AF(\psi ^{-\alpha }\rho _0)$ for any
$\alpha \in {\mathbb Z}$. Since $\dim AF(\psi ^{-\alpha }\rho _0) = 1$, the space
$AF(\psi ^{-\alpha }\rho _0)$ is generated by $\theta {-\alpha r_1 \brack \alpha r_2}(z, \tau )$.
\par
The maximal complex linear subspace ${\mathbb C}_{\Gamma }^1$ contained in
${\mathbb R}_{\Gamma }^3$ is the $z$-plane. Let $\sigma _{\Gamma } : {\mathbb C}^2
\longrightarrow {\mathbb C}_{\Gamma }^1$ be the projection. We define a factor of
automorphy $\tilde{\rho _0} : \Gamma \times {\mathbb C}^2 \longrightarrow {\mathbb C}^{*}$ by
$\tilde{\rho _0} := \rho _0 \circ (\sigma _{\Gamma } \times \sigma _{\Gamma })$.
Let $AF(\tilde{\rho _0})$ be the set of all automorphic forms for $\tilde{\rho _0}$.
Any $\varphi \in AF(\tilde{\rho _0})$ is represented as
\begin{equation}
\varphi (z, w) = \sum _{\alpha \in {\mathbb Z}} C_{\alpha }(z){\bf e}(w)^{\alpha },\quad
C_{\alpha } \in AF(\psi ^{-\alpha }\rho _0)
\end{equation}
with $\lim _{|\alpha | \to \infty } \root{|\alpha |}\of{|C_{\alpha }(z)|} = 0$ uniformly on
every compact subsets of ${\mathbb C}_{\Gamma }^1$.
We set 
\begin{equation*}
AF_0(\tilde{\rho _0}) := \{ \varphi \in AF(\tilde{\rho _0})\ \text{with finite terms in (4.6)} \}.
\end{equation*}
The space $AF_0(\tilde{\rho _0})$ is generated by
\begin{equation*}
\left\{ \theta {-\alpha r_1 \brack \alpha r_2}(z, \tau ){\bf e}(w)^{\alpha } ;
\alpha \in {\mathbb Z} \right\}.
\end{equation*}
Then the following function is fundamental:
\begin{equation}
\Theta (z,w) := \theta {0 \brack 0}(z, \tau) + \theta {-r_1 \brack r_2}(z, \tau )
{\bf e}(w).
\end{equation}
\par
Fixing $P_0 \in X_S^{\circ }$, we define a period map
$\varphi = (\varphi _1, \varphi _2)$ on $X\setminus S$ by
\begin{equation*}
\varphi _1(P) := \int _{P_0}^{P} \rho ^{*}\omega \quad \text{and}\quad
\varphi _2(P) := \int _{P_0}^{P} \rho ^{*}\eta .
\end{equation*}
Omitting $\tau $, we write $\theta {0 \brack 0}(z) = \theta {0 \brack 0}(z, \tau )$
without confusion. For any $c = (c_1, c_2) \in {\mathbb C}^2$ we define
\begin{equation}
{\mathfrak T}_c(P) := \Theta (\varphi _1(P) - c_1, \varphi _2(P) - c_2).
\end{equation}
Then ${\mathfrak T}_c(P)$ is a multi valued holomorphic function on 
$X \setminus S$. By the proof of Proposition 7.1 in \cite{ref2},
we see that ${\mathfrak T}_c(P)$ is meromorphic on $X$.
It has a simple pole at $P_2$ and is holomorphic on 
$X \setminus \{ P_2\}$. We note
$\lim _{P \to P_1} {\bf e}(\varphi _2(P) - c_2) = 0$.

\section{Generalization of Riemann's constants}
Take $c = (c_1, c_2) \in {\mathbb C}^2$ such that
$\theta {0 \brack 0}(\varphi _1(P_1) - c_1) \not= 0$.
Since $\theta { -r_1 \brack r_2}(\varphi _1(P) - c_1)$ is not identically zero
and $P_2$ is the pole of ${\bf e}(\varphi _2(P) - c_2)$, 
${\mathfrak T}_c(P)$  is not identically zero.
Let $n({\mathfrak T}_c)$ be the number of zeros of ${\mathfrak T}_c$.
The point $P_2$ is the only pole of ${\mathfrak T}_c$. Then we obtain
\begin{equation}
 \begin{aligned}
n({\mathfrak T}_c) - 1 & = \frac{1}{2\pi \sqrt{-1}}\int _{\partial X^{\circ }}
d\log {\mathfrak T}_c\\
 & = \frac{1}{2\pi \sqrt{-1}} \left( \int _{\alpha } + \int _{\beta } +
\int _{\alpha ^{-1}} + \int _{\beta ^{-1}}\right)
d \log {\mathfrak T}_c
 \end{aligned}
\end{equation}
by the argument principle. Let $\alpha ^{-}$ and $\beta ^{-}$ be the
corresponding edges of $X^{\circ }$ to $\alpha $ and $\beta $ respectively.
For $P$ on $\alpha $, we denote by $P^{-}$ the corresponding point to
$P$ on $\alpha ^{-}$. In this case we have
\begin{equation}
\varphi (P^{-}) = \varphi (P) + (\tau , r_2).
\end{equation}
Similarly, we have
\begin{equation}
\varphi (P^{-}) = \varphi (P) - (1, r_1)
\end{equation}
for $P$ on $\beta $. By (5.2) and (5.3) we obtain
\begin{equation}
{\mathfrak T}_c(P^{-}) = {\bf e}\left[ - \frac{1}{2}\tau - (\varphi _1(P) - c_1)\right]
{\mathfrak T}_c(P)
\end{equation}
if $P$ is on $\alpha $, and
\begin{equation}
{\mathfrak T}_c(P^{-}) = {\mathfrak T}_c(P)
\end{equation}
if $P$ is on $\beta $. Therefore, the right hand side of (5.1) is
\begin{equation*}
\frac{1}{2\pi \sqrt{-1}}\left[ \int _{\alpha }d \log {\mathfrak T}_c -
\int _{\alpha }( - 2\pi \sqrt{-1} d \varphi _1 + d \log {\mathfrak T}_c)\right]
 = \int _{\alpha } \rho ^{*}\omega = 1.
\end{equation*}
Hence we have $n({\mathfrak T}_c) = 2$.\par
Let $U_1$ and $U_2$ be open disks enclosed by $\gamma _1$ and $\gamma _2$
respectively. Since $P_1$ and $P_2$ are not zeros of ${\mathfrak T}_c$, the zeros
of ${\mathfrak T}_c$ are in $X^{\circ } \setminus (\overline{U_1} \cup \overline{U_2})$
if $\varepsilon > 0$ and $\delta > 0$ are sufficiently small. Let
$Q_1, Q_2 \in X^{\circ } \setminus (\overline{U_1} \cup \overline{U_2})$ be the
zeors of ${\mathfrak T}_c$. We define a divisor
$D := Q_1 + Q_2 \in {\rm Div}(X_{{\mathfrak m}})$, where ${\rm Div}(X_{{\mathfrak m}})$ is
the group of divisors prime to $S$ (see \cite{ref2}).\par
We determine the image $W := \varphi (D)$ of $D$. Let $t$ be a coordinate
on $\overline{U_2}$ with $t(P_2) = 0$. Then ${\mathfrak T}_c(P)$ is written as a function
of $t$ as follows:
\begin{equation}
{\mathfrak T}_c(t) = \frac{c_{-1}}{t} + h_2(t; c),
\end{equation}
where $c_{-1} \not= 0$ and $h_2(t;c)$ is a holomorphic function of $t$ depending
on $c$. Therefore we have
\begin{equation}
\frac{{\mathfrak T}_c'(t)}{{\mathfrak T}_c(t)} = - \frac{1}{t} + h_3(t; c),
\end{equation}
where $h_3(t; c)$ is a holomorphic function of $t$ depending on $c$. We set
$H_3(t; c) := \int _{0}^{t} h_3(t; c)dt$. We define a map
$d(t) : {\mathbb C}^2 \longrightarrow {\mathbb C}^2$ by
\begin{equation}
d(t)(c) = (d_1(c), d_2(t)(c)) := \left( c_1, c_1r_1 + \frac{1}{2\pi \sqrt{-1}} H_3(t; c)\right)
\end{equation}
for $c = (c_1, c_2) \in {\mathbb C}^2$.\par
The circle $\gamma _2$ is written as $t = \varepsilon {\bf e}(u)$,
$0\leq u \leq 1$. We set $a(\varepsilon ) := \int _{0}^{1}\varphi _2(\varepsilon {\bf e}(u))du$.
We note that $a(\varepsilon )$ does not depend on $c = (c_1, c_2)$.
We may assume that $\alpha $ and $\beta $ have the common initial
point $Q_0$. We define a generalized Riemann's constant $\kappa (\varepsilon )=
(\kappa _1, \kappa _2(\varepsilon ))$ by
\begin{align*}
\kappa _1 &:= - \frac{1}{2}\tau - \varphi _1(Q_0) + \varphi _1(P_2) +
\int _{\alpha }\varphi _1 \rho ^{*}\omega ,\\
\kappa _2(\varepsilon ) & := \left( - \frac{1}{2}\tau - \varphi _1(Q_0)\right) r_1 +
a(\varepsilon ) + \int _{\alpha }\varphi _2 \rho ^{*}\omega .
\end{align*}

\begin{theorem}
Using the above notations, we obtain
\begin{equation}
W = \varphi (D) \equiv d(\varepsilon )(c) + \kappa (\varepsilon )\  \mod \Gamma .
\end{equation}
\end{theorem}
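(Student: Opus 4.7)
My approach is the classical Riemann-theta argument for Jacobi inversion, adapted to the singular setting. Since the divisor $D = Q_1 + Q_2$ is the divisor of zeros of ${\mathfrak T}_c$, whose only pole is a simple pole at $P_2$, one can extract $\varphi_i(Q_1)+\varphi_i(Q_2)$ for $i=1,2$ from a contour integral of $\varphi_i\,d\log{\mathfrak T}_c$. On the domain $\Omega := X^\circ \setminus (\overline{U_1}\cup\overline{U_2})$, the form $d\log{\mathfrak T}_c$ is meromorphic with only two simple poles of residue $1$ at $Q_1, Q_2$, while $\varphi_1, \varphi_2$ are single-valued and holomorphic (the pole of ${\mathfrak T}_c$ at $P_2$ and the logarithmic singularities of $\varphi_2$ at $P_1, P_2$ coming from $\int_{\gamma_1}\rho^*\eta = 1$ and, by the residue theorem on $X$, $\int_{\gamma_2}\rho^*\eta = -1$, are all excluded). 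The residue theorem yields
\[
\varphi_i(Q_1)+\varphi_i(Q_2) = \frac{1}{2\pi\sqrt{-1}}\left(\int_{\partial X^\circ} - \int_{\gamma_1} - \int_{\gamma_2}\right)\varphi_i\,d\log{\mathfrak T}_c,
\]
and since $Q_1, Q_2$ are independent of $\delta$ I would take $\delta \to 0$ at the end.

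For the $\partial X^\circ$ integral, I would pair $(\alpha, \alpha^{-1})$ and $(\beta, \beta^{-1})$ and use (5.2)--(5.5): the derivative identity $d\log{\mathfrak T}_c(P^-) - d\log{\mathfrak T}_c(P) = -2\pi\sqrt{-1}\,\rho^*\omega$ on $\alpha$ and $=0$ on $\beta$, together with the period jumps $\varphi(P^-)-\varphi(P) = (\tau, r_2)$ or $-(1, r_1)$, collapse each paired integral to a single integral over $\alpha$ or $\beta$. Using $\int_\alpha\rho^*\omega = 1$ and $\int_\beta\rho^*\omega = \tau$, together with the known jumps of $\log{\mathfrak T}_c$ across opposite edges, this produces (mod $\Gamma$) the Riemann-constant pieces $-\tfrac12\tau - \varphi_1(Q_0) + c_1 + \int_\alpha\varphi_1\,\rho^*\omega$ for $i=1$, and $(-\tfrac12\tau - \varphi_1(Q_0))r_1 + c_1 r_1 + \int_\alpha\varphi_2\,\rho^*\omega$ for $i=2$. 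The leftover terms are integer combinations of the columns of (3.1) and so lie in $\Gamma$.

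For the $\gamma_2$ integral, the expansions (5.6), (5.7) give $d\log{\mathfrak T}_c = (-1/t + h_3(t; c))\,dt$. For $i=1$, $\varphi_1$ is holomorphic at $P_2$ and residue calculus yields $-2\pi\sqrt{-1}\,\varphi_1(P_2)$. For $i=2$, parameterizing $\gamma_2$ by $t = \varepsilon\,{\bf e}(u)$, the $-1/t$ piece directly gives $-2\pi\sqrt{-1}\,a(\varepsilon)$; for the $h_3$ piece, split $\varphi_2 = -\tfrac{1}{2\pi\sqrt{-1}}\log t + g(t)$ with $g$ holomorphic and integrate by parts using $h_3\,dt = dH_3$, $H_3(0;c)=0$, and the monodromy $[\log t]_{\gamma_2} = 2\pi\sqrt{-1}$, which yields $-H_3(\varepsilon; c)$. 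Hence $\tfrac{1}{2\pi\sqrt{-1}}\int_{\gamma_2}\varphi_2\,d\log{\mathfrak T}_c = -a(\varepsilon) - \tfrac{1}{2\pi\sqrt{-1}}H_3(\varepsilon; c)$. The $\gamma_1$ integral vanishes in the limit: since ${\mathfrak T}_c(P_1)\neq 0$, $d\log{\mathfrak T}_c$ is holomorphic near $P_1$, and a direct estimate gives $\int_{\gamma_1}\varphi_i\,d\log{\mathfrak T}_c = O(\delta)$ for $i=1,2$. Reassembling the three contributions reproduces $d_1(c)+\kappa_1$ and $d_2(\varepsilon)(c)+\kappa_2(\varepsilon)$ respectively.

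The main obstacle is the bookkeeping for $i=2$: one must simultaneously track the quasi-periodic factor in (5.4), the log-singularities of $\varphi_2$ at both $P_1$ and $P_2$, and the fact that $c_1 r_1$ (rather than $c_2$) appears in $d_2$, which emerges only by following how the $c_1$-translation inside $\theta {-r_1 \brack r_2}(\varphi_1 - c_1)$ interacts with the $r_2$-period jump of $\varphi_2$ across $\alpha$. Correctly distinguishing which surviving terms are genuine period vectors of $\Gamma$ (to be absorbed in the congruence) from genuine contributions to $\kappa(\varepsilon)$ and $d(\varepsilon)(c)$ is the delicate part.
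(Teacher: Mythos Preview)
Your proposal is correct and follows essentially the same route as the paper: the argument principle on $X^{\circ}\setminus(\overline{U_1}\cup\overline{U_2})$, pairing of opposite edges via (5.2)--(5.5), the explicit evaluation $\frac{1}{2\pi\sqrt{-1}}\int_{\beta}d\log{\mathfrak T}_c = -\tfrac12\tau - (\varphi_1(Q_0)-c_1)$ (this, multiplied by $(1,r_1)$, is where the $c_1 r_1$ term in $d_2$ actually arises, not the $\alpha$-pair as your last paragraph suggests), and separate treatment of the $\gamma_j$ integrals with $\delta\to 0$. The only technical variations are that the paper handles $\int_{\gamma_2}\varphi_2\,h_3\,dt$ by parts via $d(\varphi_2 H_3)=H_3\,\rho^{*}\eta+\varphi_2\,dH_3$ rather than by splitting $\varphi_2$ into $\log t$ plus holomorphic, and that your $O(\delta)$ bound for $\int_{\gamma_1}\varphi_2\,d\log{\mathfrak T}_c$ should read $O(\delta\log\delta)$, which still tends to $0$.
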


\begin{proof}
By the argument principle we obtain
\begin{equation}
\begin{aligned}
W &= \frac{1}{2\pi \sqrt{-1}}\int _{\partial (X^{\circ } \setminus (\overline{U_1} \cup
\overline{U_2}))}\varphi d \log {\mathfrak T}_c\\
& = \frac{1}{2\pi \sqrt{-1}}\left( \int _{\alpha } + \int _{\beta } + \int _{\alpha ^{-1}}
+ \int _{\beta ^{-1}} - \int _{\gamma _1} - \int _{\gamma _2}\right) \varphi d \log
{\mathfrak T}_c.
\end{aligned}
\end{equation}
\par
If $P$ is on $\alpha $, then we define $\varphi ^{-}(P) := \varphi (P^{-})$ and
${\mathfrak T}_c^{-}(P) := {\mathfrak T}_c(P^{-})$, where $P^{-}$ is the corresponding
point to $P$ on $\alpha ^{-}$. We consider
\begin{multline}
\frac{1}{2\pi \sqrt{-1}}\left( \int _{\alpha } + \int _{\alpha ^{-1}}\right) \varphi d \log
{\mathfrak T}_c\\
= \frac{1}{2\pi \sqrt{-1}}\int _{\alpha }(\varphi - \varphi ^{-})d \log {\mathfrak T}_c +
\frac{1}{2\pi \sqrt{-1}}\int _{\alpha } \varphi ^{-} ( d \log {\mathfrak T}_c -
d \log {\mathfrak T}_c^{-}).
\end{multline}
By (5.2) and (5.4) we have
\begin{gather*}
\varphi (P) - \varphi ^{-}(P) = - (\tau , r_2),\\
\log {\mathfrak T}_c^{-}(P) = 2\pi \sqrt{-1}\left[ - \frac{1}{2} \tau
- (\varphi _1(P) - c_1)\right] + \log {\mathfrak T}_c(P)
\end{gather*}
for $P$ on $\alpha $. Therefore we obtain
\begin{equation}
\begin{split}
\frac{1}{2\pi \sqrt{-1}}\left( \int _{\alpha } \right. & \left. + \int _{\alpha ^{-1}}\right) \varphi
d \log {\mathfrak T}_c\\
& 
\equiv \int _{\alpha }\varphi d\varphi _1 - \left( \frac{1}{2\pi \sqrt{-1}}\int_{\alpha }
d \log {\mathfrak T}_c\right) (\tau , r_2)\ \mod \Gamma
\end{split}
\end{equation}
by (5.11). Let $Q'$ be the terminal point of $\alpha $. Then we have
$\varphi (Q') = \varphi (Q_0) + (1, r_1)$ and
\begin{equation*}
\begin{aligned}
{\mathfrak T}_c(Q') & = \theta {0 \brack 0}(\varphi _1(Q_0) + 1 - c_1) +
\theta {-r_1 \brack r_2}(\varphi _1(Q_0) + 1 - c_1) {\bf e}(\varphi _2(Q_0) +
r_1 - c_2)\\
 & = {\mathfrak T}_c(Q_0).
\end{aligned}
\end{equation*}
Therefore we obtain
\begin{equation}
\int _{\alpha }d \log {\mathfrak T}_c = 0.
\end{equation}
From (5.12) and (5.13) it follows that
\begin{equation}
\frac{1}{2\pi \sqrt{-1}}\left( \int _{\alpha } + \int _{\alpha ^{-1}}\right) \varphi d \log
{\mathfrak T}c \equiv \int _{\alpha }\varphi d \varphi _1 \ \mod \Gamma .
\end{equation}
\par
We carry out a similar calculation on $\beta $. We have
\begin{equation}
\begin{split}
\frac{1}{2\pi \sqrt{-1}}\left( \int _{\beta } \right. & \left. + \int _{\beta ^{-1}}\right) \varphi
d \log {\mathfrak T}_c\\
& = \int _{\beta }(\varphi - \varphi ^{-})d \log {\mathfrak T}_c + \frac{1}{2\pi \sqrt{-1}}
\int _{\beta } \varphi ^{-}(d \log {\mathfrak T}_c - d \log {\mathfrak T}_c^{-}),
\end{split}
\end{equation}
where $\varphi ^{-}$ and ${\mathfrak T}_c^{-}$ are similarly defined on $\beta $.
By (5.3) and (5.5) we obtain $\varphi - \varphi ^{-} = (1, r_1)$ and
${\mathfrak T}_c = {\mathfrak T}_c^{-}$ on $\beta $. Hence the right hand side of (5.15) is
\begin{equation*}
\left(\frac{1}{2\pi \sqrt{-1}}\int _{\beta } d \log {\mathfrak T}_c\right) (1, r_1).
\end{equation*}
Let $Q''$ be the terminal point of $\beta $. Since
$\varphi (Q'') = \varphi (Q_0) + (\tau , r_2)$, we obtain
\begin{equation*}
\begin{aligned}
{\mathfrak T}_c(Q'') & = \theta {0 \brack 0}(\varphi _1(Q'') - c_1) +
\theta {-r_1 \brack r_2}(\varphi _1(Q'') - c_1) {\bf e}(\varphi _2(Q'') 
 - c_2)\\
 & = {\bf e}\left[ - \frac{1}{2}\tau - (\varphi _1(Q_0) - c_1)\right] {\mathfrak T}_c(Q_0),
\end{aligned}
\end{equation*}
hence
\begin{equation*}
\begin{aligned}
\frac{1}{2\pi \sqrt{-1}}\int _{\beta }d \log {\mathfrak T}_c & =
\frac{1}{2\pi \sqrt{-1}}\left( \log {\mathfrak T}_c(Q'') - \log {\mathfrak T}_c(Q_0)\right)\\
& = - \frac{1}{2}\tau - (\varphi _1(Q_0) - c_1).
\end{aligned}
\end{equation*}
Thus we obtain
\begin{equation}
\begin{split}
\frac{1}{2\pi \sqrt{-1}}\left( \int _{\beta }  \right. & + \left. \int _{\beta ^{-1}}\right)
\varphi d \log {\mathfrak T}_c \\
& = \left[ - \frac{1}{2}\tau - ( \varphi _1(Q_0) - c_1)\right] (1, r_1).
\end{split}
\end{equation}
\par
Next we consider $\int _{\gamma _j} \varphi d \log {\mathfrak T}_C$ for $j = 1, 2$.
Since $\varphi _1(P)$ is holomorphic at $P_1$ and ${\mathfrak T}_c(P_1) \not= 0$,
$\varphi _1 d \log {\mathfrak T}_c$ is holomorphic at $P_1$. Then we have
\begin{equation}
\int _{\gamma _1} \varphi _1 d \log {\mathfrak T}_c = 0.
\end{equation}
Furthermore, we obtain
\begin{equation}
\frac{1}{2\pi \sqrt{-1}} \int _{\gamma _2} \varphi _1 d \log {\mathfrak T}_c =
- \varphi _1(P_2),
\end{equation}
for ${\mathfrak T}_c$ has the pole of order 1 at $P_2$. We note
\begin{equation}
\int _{\gamma _1} \varphi _2 d \log {\mathfrak T}_c = \int _{\gamma _1}
d(\varphi _2 \log {\mathfrak T}_c) - \int _{\gamma _1}(\log {\mathfrak T}_c)d 
\varphi _2 .
\end{equation}
Let $t$ be a local coordinate on $\overline{U_1}$ with $t(P_1) = 0$.
Since $d \varphi _2 = \rho ^{*}\eta $ has a pole of order 1 at $P_1$, we have the
following representation
\begin{equation*}
d \varphi _2 = \left( \frac{1}{2\pi \sqrt{-1}} \frac{1}{t} + h(t)\right) dt,
\end{equation*}
where $h(t)$ is a holomorphic function. There exists a single-valued branch of
$\log {\mathfrak T}_c$ on $\overline{U_1}$. Then we obtain
\begin{equation}
\begin{split}
\int _{\gamma _1}(\log {\mathfrak T}_c)d \varphi _2 & = \frac{1}{2\pi \sqrt{-1}}
\int _{\gamma _1} (\log {\mathfrak T}_c) \frac{1}{t}dt\\
& = \log {\mathfrak T}_c(P_1).
\end{split}
\end{equation}
The circle $\gamma _1$ is parametrized as $t = \delta {\bf e}(u)$, $0 \leq u \leq 1$.
Then we have
\begin{equation*}
\begin{split}
\int _{\gamma _1} d( \varphi _2 \log {\mathfrak T}_c) & = \varphi _2(\delta {\bf e}(1))
\log {\mathfrak T}_c(\delta {\bf e}(1)) - \varphi _2(\delta {\bf e}(0))
\log {\mathfrak T}_c(\delta {\bf e}(0)) \\
& = \left( \int _{\gamma _1}\rho ^{*}\eta \right) \log {\mathfrak T}_c(\delta)\\
& =  \log {\mathfrak T}_c(\delta).
\end{split}
\end{equation*}
Therefore, we obtain
\begin{equation}
\int _{\gamma _1} d(\varphi _2 \log {\mathfrak T}_c) = \log {\mathfrak T}_c(\delta )
\end{equation}
by (5.19). It follows from (5.19), (5.20) and (5.21) that
\begin{equation}
\lim _{\delta \to 0} \int _{\gamma _1} \varphi _2 d \log {\mathfrak T}_c = 0.
\end{equation}
\par
The functions ${\mathfrak T}_c(t)$ and ${\mathfrak T}_c'(t)/{\mathfrak T}_c(t)$ are represented as in
(5.6) and (5.7) respectively. Then we obtain
\begin{equation}
\int _{\gamma _2} \varphi _2 d \log {\mathfrak T}_c = - \int _{\gamma _2} \varphi _2(t)
\frac{1}{t} dt + \int _{\gamma _2} \varphi _2(t) h_3(t; c)dt.
\end{equation}
If we parametrize $\gamma _2$ as $t = \varepsilon {\bf e}(u)$, $0 \leq u \leq 1$, then we have
\begin{equation}
\int _{\gamma _2} \varphi _2(t)\frac{1}{t}dt = 2\pi \sqrt{-1}a(\varepsilon ).
\end{equation}
Since $d(\varphi _2H_3) = H_3d \varphi _2 + \varphi _2 d H_3$, we have
\begin{equation}
\int _{\gamma _2} \varphi _2(t) h_3(t;c)dt = \int _{\gamma _2}d(\varphi _2 H_3) -
\int _{\gamma _2}H_3 \rho ^{*}\eta .
\end{equation}
Furthermore we have
\begin{equation*}
\begin{split}
\int _{\gamma _2}d(\varphi _2 H_3) & = \varphi _2(\varepsilon {\bf e}(1))
H_3(\varepsilon {\bf e}(1);c) - \varphi _2(\varepsilon {\bf e}(0))
H_3(\varepsilon {\bf e}(0);c)\\
& = \left( \int _{\gamma _2}\rho ^{*}\eta \right) H_3(\varepsilon ; c)\\
& = - H_3(\varepsilon ; c).
\end{split}
\end{equation*}
We have the representation of $\rho ^{*}\eta $ on $\overline{U_2}$ as
\begin{equation*}
\rho ^{*}\eta = \left( - \frac{1}{2\pi \sqrt{-1}} \frac{1}{t} + h_1(t)\right)dt,
\end{equation*}
where $h_1(t)$ is a holomorphic function. Then we obtain
\begin{equation*}
\begin{split}
\int _{\gamma _2}H_3 \rho ^{*}\eta & = - \frac{1}{2\pi \sqrt{-1}}\int _{\gamma _2}
H_3(t; c)\frac{1}{t}dt\\
& = - H_3(0; c) = 0.
\end{split}
\end{equation*}
Therefore we have
\begin{equation*}
\int _{\gamma _2}\varphi _2(t) h_3(t; c) dt = - H_3(\varepsilon : c)
\end{equation*}
by (5.25). Hence we obtain
\begin{equation}
\int _{\gamma _2} \varphi _2 d \log {\mathfrak T}_c = - 2\pi \sqrt{-1} a(\varepsilon )
- H_3(\varepsilon ; c).
\end{equation}
Letting $\delta \to 0$, we finally obtain
\begin{equation*}
\begin{split}
W & \equiv \int _{\alpha }\varphi \rho ^{*}\omega + \left[ - \frac{2}{2}\tau - 
(\varphi _1(Q_0) - c_1)\right] (1, r_1)\\
& \quad + \left( \varphi _1(P_2), a(\varepsilon ) + \frac{1}{2\pi \sqrt{-1}}
H_3(\varepsilon ; c)\right)\\
& = d(\varepsilon )(c) + \kappa (\varepsilon ) \ \mod \Gamma
\end{split}
\end{equation*}
by (5.10), (5.14), (5.16), (5.17), (5.18), (5.22) and (5.26).
\end{proof}

\section{The image of $X \setminus S$} 
Let $t$ be the local coordinate on $\overline{U_2}$ in the previous section.
We consider ${\mathfrak T}_c(P)$ as a function of $t$ on $\overline{U_2}$
as follows:
\begin{equation*}
{\mathfrak T}_c(t) = \theta {0 \brack 0}(\varphi _1(t) - c_1) +
\theta {-r_1 \brack r_2} (\varphi _1(t) - c_1){\bf e}(\varphi _2(t) - c_2).
\end{equation*}
By a straight calculation we obtain
\begin{equation}
h_3(t; c) = \frac{h_2(t; c) + h_2'(t; c)t}{c_{-1} + h_2(t; c)t}
\end{equation}
by (5.6) and (5.7), where $h_2'(t; c)$ is the derivative of $h_2(t; c)$ by $t$.
Especially we have
\begin{equation}
h_3(0; c) = \frac{1}{c_{-1}} h_2(0; c).
\end{equation}
\par
We determine $c_{-1}$ and $h_2(t; c)$. Fix a point $\tilde{P_2} \in U_2 \setminus
\{ P_2\}$ and set $t_0 := t(\tilde{P_2})$. Then we have
\begin{equation*}
{\bf e}(\varphi _2(t)) = {\bf e}\left( \varphi _2(t_0) + \int _{t_0}^{t}\rho ^{*}\eta \right).
\end{equation*}
Using the representation of $\rho ^{*}\eta $ on $\overline{U_2}$ in the previous section, 
we obtain
\begin{equation*}
{\bf e}(\varphi _2(t)) = {\bf e}\left( \varphi _2(t_0) + \log t_0 + \int _{t_0}^{t}
h_1(t) dt \right) \frac{1}{t}.
\end{equation*}
If we set
\begin{equation*}
g(t) = {\bf e}\left( \varphi _2(t_0) + \log t_0 + \int _{t_0}^{t} h_1(t)dt\right),
\end{equation*}
then we have
\begin{equation*}
{\mathfrak T}_c(t) = \theta {0 \brack 0}(\varphi _1(t) - c_1) + \theta {-r_1 \brack r_2}
(\varphi _1(t) - c_1) g(t) {\bf e}(- c_2) \frac{1}{t}.
\end{equation*}
Therefore we obtain
\begin{equation}
c_{-1} = \theta {-r_1 \brack r_2}(\varphi _1(P_2) - c_1) g(0) {\bf e}(- c_2)
\end{equation}
and
\begin{equation}
\begin{split}
h_2(t; c) &= \theta {0 \brack 0}(\varphi _1(t) - c_1)\\
& \quad + \left\{ \theta {-r_1 \brack r_2}(\varphi _1(t) - c_1) g(t) -
\theta {-r_1 \brack r_2}(\varphi _1(P_2) - c_1) g(0) \right\} {\bf e}( - c_2).
\end{split}
\end{equation}
Hence we obtain
\begin{equation}
h_3(0; c) = \frac{\theta {0 \brack 0}(\varphi _1(P_2) - c_1){\bf e}(c_2)}
{\theta {-r_1 \brack r_2}(\varphi _1(P_2) - c_1) g(0)}
\end{equation}
by (6.2), (6.3) and (6.4).\par
The Jacobian matrix $J_{d(t)}$ of the map $d(t)$ is
\begin{equation}
\begin{split}
J_{d(t)}(c) & =
\begin{pmatrix}
\frac{\partial d_1}{\partial c_1}(c) & \frac{\partial d_1}{\partial c_2}(c)\\
\frac{\partial d_2(t)}{\partial c_1}(c) & \frac{\partial d_2(t)}{\partial c_2}(c)
\end{pmatrix}\\
& = 
\begin{pmatrix}
1 & 0\\
\frac{\partial d_2(t)}{\partial c_1}(c) & \frac{1}{2\pi \sqrt{-1}}
\frac{\partial H_3(t; c)}{\partial c_2}
\end{pmatrix}.
\end{split}
\end{equation}
Then, if we set $N_t := \{ c \in {\mathbb C}^2 ; \partial H_3(t; c)/\partial c_2 = 0 \}$,
then $d(t)$ is a locally biholomorphic map on ${\mathbb C}^2 \setminus N_t$.
We define a map $d'(t) : {\mathbb C}^2 \longrightarrow {\mathbb C}^2$ by
\begin{equation*}
d'(t)(c) = (d'_1(c), d'_2(t)(c)) := \left( c_1, \frac{1}{2\pi \sqrt{-1}}h_3(t; c)\right)
\end{equation*}
for any $c = (c_1, c_2) \in {\mathbb C}^2$. For any $t \in \overline{U_2}$
we define the period group $\widetilde{\Gamma }_t$ of $d(t)$ by
\begin{equation*}
\widetilde{\Gamma }_t := \{ \tilde{c} \in {\mathbb C}^2 ; d(t)(c + \tilde{c}) = d(t)(c)\ 
\text{for all}\ c \in {\mathbb C}^2\}.
\end{equation*}
Similarly,
\begin{equation*}
\widetilde{\Gamma }'_t := \{ \tilde{c} \in {\mathbb C}^2 ; d'(t)(c + \tilde{c}) = d'(t)(c)\ 
\text{for all}\ c \in {\mathbb C}^2\}
\end{equation*}
is the period group of $d'(t)$.

\begin{lemma}
There exists a neighbourhood $U_{2,0}$ of $P_2$ relatively compact in $U_2$
such that $\widetilde{\Gamma }'_t = \{ 0 \} \times {\mathbb Z}$ for all
$t \in U_{2,0}$.
\end{lemma}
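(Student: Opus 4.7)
The plan is to view $h_3(t;c)$ as a Möbius transformation in the single variable $u := {\bf e}(-c_2)$ and to deduce the statement from non-degeneracy of this Möbius transformation for a suitable choice of $c_1$. Using (6.3) and (6.4), set $A(t,c_1) := \theta {0 \brack 0}(\varphi_1(t) - c_1)$, $B(t,c_1) := \theta {-r_1 \brack r_2}(\varphi_1(t) - c_1)\, g(t)$ and $B_0(c_1) := B(0,c_1)$, so that $c_{-1} = B_0(c_1)\, u$ and $h_2(t;c) = A + (B - B_0)\, u$. Since $B_0$ is independent of $t$, also $h_2'(t;c) = A' + B'\, u$. Substituting into (6.1) then yields
\[
h_3(t;c) = \frac{(A + A't) + (B - B_0 + B't)\, u}{A\, t + \bigl(B_0 + (B - B_0)t\bigr)\, u},
\]
which is affine over affine in $u$, hence genuinely Möbius. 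A direct expansion gives the determinant
\[
\Delta(t,c_1) := (A+A't)\bigl(B_0+(B-B_0)t\bigr) - (B-B_0+B't)(At) = A\,B_0 + A'\,B_0\, t + O(t^2),
\]
and in particular $\Delta(0,c_1) = A(0,c_1)\,B_0(c_1) = \theta {0 \brack 0}(\varphi_1(P_2) - c_1)\,\theta {-r_1 \brack r_2}(\varphi_1(P_2) - c_1)\,g(0)$.

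The inclusion $\{0\}\times {\mathbb Z} \subseteq \widetilde{\Gamma}'_t$ is immediate: $d'_1$ fixes $c_1$, and $h_3$ depends on $c_2$ only through $u = {\bf e}(-c_2)$, which is invariant under $c_2 \mapsto c_2 + k$ for $k \in {\mathbb Z}$. For the reverse inclusion, I fix a single $c_1^{*} \in {\mathbb C}$ with $A(0,c_1^{*}) \not= 0$ and $B_0(c_1^{*}) \not= 0$; such a $c_1^{*}$ exists because neither theta function is identically zero and $g(0) \not= 0$. Then $\Delta(0, c_1^{*}) \not= 0$, so by continuity there is a neighborhood $U_{2,0}$ of $P_2$, relatively compact in $U_2$, on which $\Delta(t, c_1^{*}) \not= 0$.

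Now take $t \in U_{2,0}$ and $\tilde{c} = (\tilde{c}_1, \tilde{c}_2) \in \widetilde{\Gamma}'_t$. Equating first coordinates of $d'(t)(c+\tilde{c}) = d'(t)(c)$ forces $\tilde{c}_1 = 0$. Setting $c_1 = c_1^{*}$ and $\zeta := {\bf e}(-\tilde{c}_2)$, the second-coordinate condition becomes $R(\zeta u) = R(u)$ for all $u \in {\mathbb C}^{*}$, where $R(u) = (\alpha + \beta u)/(\gamma + \delta u)$ has $\alpha\delta - \beta\gamma = \Delta(t, c_1^{*}) \not= 0$. Since a non-degenerate Möbius transformation is injective on ${\mathbb P}^1$, this forces $\zeta u = u$ for all $u$, hence $\zeta = 1$ and $\tilde{c}_2 \in {\mathbb Z}$. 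The main delicacy is the algebraic check that (6.1), (6.3), (6.4) do produce affine (not higher-degree) expressions in $u$ in both numerator and denominator, so that the Möbius framework applies and $\Delta$ reduces to a product of two theta values at $t=0$; the rest is the elementary fact that a non-degenerate Möbius transformation has trivial multiplicative stabilizer, combined with continuity of $\Delta$ in $t$.
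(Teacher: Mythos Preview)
Your proposal is correct and follows essentially the same route as the paper. Both arguments rewrite $h_3(t;c)$ as a fractional-linear function of ${\bf e}(-c_2)$, observe that the associated $2\times 2$ determinant at $t=0$ equals $\theta{0 \brack 0}(\varphi_1(P_2)-c_1)\,\theta{-r_1 \brack r_2}(\varphi_1(P_2)-c_1)\,g(0)$, and use continuity in $t$ to produce $U_{2,0}$; you are only slightly more explicit than the paper in fixing a particular $c_1^{*}$ and in spelling out the injectivity of a non-degenerate M\"obius map.
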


\begin{proof}
By (6.3) and (6.4) we can write as
\begin{equation*}
h_2(t; c) = \alpha _1(t; c_1) + \alpha _2(t; c_1) {\bf e}(- c_2)
\end{equation*}
and $c_{-1} = \beta (c_1){\bf e}(- c_2)$, where $\alpha _1(t; c)$ and
$\alpha _2(t; c_1)$ are holomorphic in $(t, c_1)$. Furthermore we have
\begin{equation*}
h_2'(t; c) = \alpha _1'(t; c_1) + \alpha _2'(t; c_1){\bf e}( - c_2),
\end{equation*}
where $\alpha _1'(t; c_1)$ and $\alpha _2'(t; c_1)$ are derivatives of
$\alpha _1(t; c_1)$ and $\alpha _2(t; c_1)$ by $t$ respectively.
It follows from (6.1) and the above representations that
\begin{equation}
h_3(t; c) = \frac{A(t; c_1) + B(t; c_1){\bf e}( - c_2)}
{C(t; c_1) + D(t; c_1){\bf e}( - c_2)},
\end{equation}
where $A(t; c_1), B(t; c_1), C(t; c_1)$ and $D(t; c_1)$ are holomorphic in $(t, c_1)$.
By (6.5) we have
\begin{equation*}
\begin{pmatrix}
A(0; c_1) & B(0; c_1)\\
C(0; c_1) & D(0; c_1)
\end{pmatrix} =
\begin{pmatrix}
\theta {0 \brack 0}(\varphi _1(P_2) - c_1) & 0 \\
0 & \theta {-r_1 \brack r_2}(\varphi _1(P_2) - c_1) g(0)
\end{pmatrix}.
\end{equation*}
Then, there exists a relatively compact open neighbourhood $U_{2,0}$ of $P_2$
in $U_2$ such that
\begin{equation}
\begin{vmatrix}
A(t; c_1) & B(t; c_1)\\
C(t; c_1) & D(t; c_1)
\end{vmatrix}
\not= 0
\end{equation}
for all $t \in U_{2,0}$. If we fix $c_1$, then the period group of $h_3(t; c)$ with
respect to $c_2$ is ${\mathbb Z}$. Therefore, we obtain
$\widetilde{\Gamma }_t' = \{ 0 \} \times {\mathbb Z}$ for all $t \in U_{2,0}$.
\end{proof}

Let $\widetilde{\Gamma } := \bigcap _{t \in U_{2,0}}\widetilde{\Gamma }_t$.
We have $\{ 0 \} \times {\mathbb Z} \subset \widetilde{\Gamma }_t$
for any $t \in U_{2,0}$. Then $\{ 0 \} \times {\mathbb Z} \subset \widetilde{\Gamma }$.
Since $h_3(t; c) = \partial H_3(t; c)/\partial t$, we obtain
$\widetilde{\Gamma } \subset \widetilde{\Gamma }_t'$ for any $t \in U_{2,0}$.
Therefore we obtain $\widetilde{\Gamma } = \{ 0 \} \times {\mathbb Z}$ by Lemma 6.1.
\par
We note that $h_3(t; c)$ is not a bounded function with respect to $c_2$ by (6.7),
hence so is $H_3(t; c)$.

\begin{lemma}
Take any $c \in {\mathbb C}^2$ and any $t \in U_{2,0} \setminus \{ P_2\}$. If
\begin{equation}
d(t)(c + \tilde{c}) = d(t)(c),
\end{equation}
then $\tilde{c} = (0, q)$ for some $q \in {\mathbb Q}$.
\end{lemma}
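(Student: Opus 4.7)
My plan is to reduce the condition $d(t)(c+\tilde c)=d(t)(c)$ to a multiplicative relation between $\mathfrak T_{c+\tilde c}(t)$ and $\mathfrak T_c(t)$, and then extract the conclusion from the explicit form of $\mathfrak T_c$ given in (4.7)--(4.8). Comparing first coordinates immediately forces $\tilde c_1=0$, so I will write $\tilde c=(0,\tilde c_2)$ and set $\mu:={\bf e}(-\tilde c_2)$. The second-coordinate equation then collapses to the scalar identity $H_3(t;c_1,c_2+\tilde c_2)=H_3(t;c_1,c_2)$.

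The key point I will exploit is that (6.1) can be recognised as a logarithmic derivative:
\begin{equation*}
h_3(s;c)=\frac{h_2(s;c)+sh_2'(s;c)}{c_{-1}+sh_2(s;c)}=\frac{d}{ds}\log\bigl(c_{-1}+sh_2(s;c)\bigr),
\end{equation*}
and by (5.6) the argument equals $s\,\mathfrak T_c(s)$. Integrating from $0$ to $t$ along a path in $U_{2,0}$ and then exponentiating (which passes to actual complex numbers and so sidesteps any branch issue in the logarithm) turns $H_3(t;c+\tilde c)=H_3(t;c)$ into
\begin{equation*}
\frac{t\,\mathfrak T_{c+\tilde c}(t)}{c_{-1}(c+\tilde c)}=\frac{t\,\mathfrak T_c(t)}{c_{-1}(c)}.
\end{equation*}
Formula (6.3) gives $c_{-1}(c+\tilde c)/c_{-1}(c)={\bf e}(-\tilde c_2)=\mu$, so the condition reduces to $\mathfrak T_{c+(0,\tilde c_2)}(t)=\mu\,\mathfrak T_c(t)$.

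I will then expand the left-hand side directly using (4.7)--(4.8):
\begin{equation*}
\mathfrak T_{c+(0,\tilde c_2)}(t)=(1-\mu)\,\theta{0\brack 0}(\varphi_1(t)-c_1)+\mu\,\mathfrak T_c(t).
\end{equation*}
Comparing with the previous identity forces $(1-\mu)\,\theta{0\brack 0}(\varphi_1(t)-c_1)=0$. Since nonvanishing of the determinant in (6.8) at $t=0$ requires $\theta{0\brack 0}(\varphi_1(P_2)-c_1)\neq 0$ (it is the $(1,1)$-entry of the limiting matrix), after possibly shrinking $U_{2,0}$ the theta value stays nonzero on all of $U_{2,0}\setminus\{P_2\}$ by continuity. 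Therefore $\mu=1$, i.e. $\tilde c_2\in\mathbb Z\subset\mathbb Q$, which proves the lemma (with in fact the stronger conclusion $\tilde c_2\in\mathbb Z$).

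I expect the most delicate step is the passage through the logarithm. This is justified cleanly because $H_3(t;c)$ is a definite integral of a function holomorphic on $U_{2,0}$, so each side of $H_3(t;c+\tilde c)=H_3(t;c)$ is a single complex number (once an integration path is fixed), and exponentiating an equality of complex numbers is unambiguous, giving $e^{H_3(t;c)}=t\,\mathfrak T_c(t)/c_{-1}(c)$ without any branch choice. The remaining ingredients---the algebraic expansion of $\mathfrak T_{c+(0,\tilde c_2)}(t)$ and the shrinking of $U_{2,0}$ to avoid zeros of $\theta{0\brack 0}(\varphi_1(\cdot)-c_1)$---are routine.
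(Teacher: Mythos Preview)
Your route differs substantially from the paper's. The paper argues qualitatively: having recorded just before the lemma that $H_3(t;c)$ is unbounded in $c_2$, it excludes $\tilde c_2\notin\mathbb R$ by a boundedness consideration and then $\tilde c_2\in\mathbb R\setminus\mathbb Q$ by a density/uniqueness argument, arriving only at $\tilde c_2\in\mathbb Q$. You instead compute directly: recognizing $h_3(s;c)=\dfrac{d}{ds}\log\bigl(s\,\mathfrak T_c(s)\bigr)$ and exponentiating $H_3$ collapses the hypothesis to the scalar identity $(1-\mu)\,\theta{0\brack 0}(\varphi_1(t)-c_1)=0$, from which you get $\mu=1$, i.e.\ $\tilde c_2\in\mathbb Z$. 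This sharper conclusion, if it held on the fixed $U_{2,0}$, would make the whole machinery that follows (the set $T$, Proposition~6.3, the choice of $t_\varepsilon$) superfluous---a genuine simplification.

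One step deserves more care. Your conclusion requires $\theta{0\brack 0}(\varphi_1(t)-c_1)\neq 0$ at the given $t$, and your fix (``shrink $U_{2,0}$'') depends on $c_1$, while the lemma is stated for a \emph{fixed} $U_{2,0}$ and \emph{arbitrary} $c$. In fairness, the paper's own choice of $U_{2,0}$ in Lemma~6.1 already rests on the $c_1$-dependent condition~(6.8)---at $t=0$ the determinant equals $\theta{0\brack 0}(\varphi_1(P_2)-c_1)\,\theta{-r_1\brack r_2}(\varphi_1(P_2)-c_1)\,g(0)$---so an implicit $c_1$-dependence is present in the paper too, and under that reading your argument goes through. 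But if one insists on a neighbourhood working uniformly in $c$, then for pairs $(t,c_1)$ with $\theta{0\brack 0}(\varphi_1(t)-c_1)=0$ your equation places no restriction on $\mu$, and the lemma as literally stated is not proved by your method there; this is presumably why the paper settles for $\mathbb Q$ and then handles the residual set $T$ separately.
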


\begin{proof}
Assume that $\tilde{c}$ satisfies (6.9). Since
$d(t)(c) = (d_1(c), d_2(t)(c))$ and $d_1(c) = c_1$, we have
$\tilde{c} = (0, \tilde{c}_2)$ with $\tilde{c}_2 \in {\mathbb C}$. We note that any
$(0, k)$ with $k \in {\mathbb Z}$ satisfies (6.9). If $1$ and $\tilde{c}_2$
are linearly independent over ${\mathbb R}$, then $H_3(t; c)$ is bounded in $c_2$.
Therefore $1$ and $\tilde{c}_2$ are linearly dependent over ${\mathbb R}$.
If $1$ and $\tilde{c}_2$ are linearly independent over ${\mathbb Z}$,
then $H_3(t; c)$ is constant with respect to $c_2$ by the uniqueness theorem.
Hence $1$ and $\tilde{c}_2$ are linearly dependent over ${\mathbb Z}$.
Thus we have $\tilde{c}_2 \in {\mathbb Q}$.
\end{proof}

We denote $\tilde{c}(q) := (0, q)$ for $q \in {\mathbb Q}$. We set
$T := \{ t \in U_{2,0} ; \widetilde{\Gamma } \not=
 \widetilde{\Gamma }_t \}.$

\begin{proposition}
The set $T$ is a nowhere dense subset of $U_{2,0}$.
\end{proposition}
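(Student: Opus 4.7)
The plan is to show $T$ is contained in a proper complex analytic subset of $U_{2,0}$, hence discrete (in particular nowhere dense).

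First, I would use Lemma 6.2 to reduce the problem. For $t \in T \setminus \{P_2\}$, $\widetilde{\Gamma}_t$ is a subgroup of $\{0\} \times {\mathbb Q}$ strictly containing $\{0\} \times {\mathbb Z}$, so it contains $(0, 1/n)$ for some integer $n \geq 2$ (any rational $p/n$ in lowest terms, together with $1$, generates $(1/n){\mathbb Z}$). Thus $T \subset \{P_2\} \cup \bigcup_{n \geq 2} T_n$ where $T_n := \{t \in U_{2,0} : (0, 1/n) \in \widetilde{\Gamma}_t\}$, and $t \in T_n$ is equivalent to $H_3(t; c_1, c_2 + 1/n) = H_3(t; c_1, c_2)$ for all $(c_1, c_2)$.

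Next I would compute the difference directly from (6.7). Setting $u := {\bf e}(-c_2)$ and $\zeta_n := {\bf e}(-1/n)$, a short calculation gives
\[
h_3(s; c_1, c_2 + 1/n) - h_3(s; c_1, c_2) = \frac{u(1-\zeta_n)(AD-BC)(s, c_1)}{(C + D\zeta_n u)(C + Du)}.
\]
Integrating over $s$ from $0$ to $t$ and expanding the result as a Laurent series in $u$ at $u = \infty$, the coefficient of $u^{-1}$ equals $(1-\zeta_n)\zeta_n^{-1}\, I(t, c_1)$, where
\[
I(t, c_1) := \int_0^t \frac{(AD-BC)(s, c_1)}{D(s, c_1)^2}\, ds.
\]
Since $\zeta_n \neq 1$ for $n \geq 2$, membership $t \in T_n$ forces $I(t, c_1) = 0$ for every $c_1$, and crucially this condition is independent of $n$. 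Hence $T \subset T_* := \{P_2\} \cup \{t \in U_{2,0} : I(t, c_1) = 0 \text{ for all } c_1 \in {\mathbb C}\}$.

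Finally, to show $T_*$ is discrete I would use $B(0, c_1) = C(0, c_1) = 0$ (from the matrix displayed after (6.8)) to obtain
\[
\partial_t I(0, c_1) = \frac{(AD-BC)(0, c_1)}{D(0, c_1)^2} = \frac{\theta {0 \brack 0}(\varphi_1(P_2) - c_1)}{\theta {-r_1 \brack r_2}(\varphi_1(P_2) - c_1)\, g(0)},
\]
which is nonzero for generic $c_1$. Picking such a $c_1^*$, $I(\cdot, c_1^*) \not\equiv 0$, so $T_*$ lies in its discrete zero set. The main technical point I anticipate is justifying the Laurent expansion in $u$ uniformly in the integration parameter $s$: this requires $|u| > \max_s |C(s, c_1)/D(s, c_1)|$, which is finite on the compact path from $0$ to $t$ thanks to the relative compactness of $U_{2,0}$ in $U_2$ and the nonvanishing of $D$ there (from the determinant in (6.8)), so the integrand is holomorphic in $u$ uniformly in $s$ and the $u^{-1}$-coefficient can be extracted term-by-term.
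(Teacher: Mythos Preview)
Your approach is sound and genuinely different from the paper's. The paper argues non-computationally: for each $q \in \mathbb{Q}\setminus\mathbb{Z}$ it fixes a single point $c_0$ at which $\tilde c(q)$ fails to be a period of some $d(t_0)$, so that $T_q \subset \{t' : d(t')(c_0+\tilde c(q)) = d(t')(c_0)\}$ is a proper analytic subset of $U_{2,0}$, hence discrete; then $T \subset \bigcup_{q}T_q$ is a countable union of discrete sets. Your route instead computes the $u^{-1}$-Laurent coefficient of the integrated difference and observes that the resulting constraint $I(t,c_1)=0$ is \emph{independent of $n$}. All the $T_n$ therefore collapse into a single analytic locus $T_*$, giving $T$ discrete---a sharper conclusion than the countable-union statement the paper reaches (and one that is honestly nowhere dense rather than merely meager).

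One point needs correction. You justify the uniform validity of the Laurent expansion by asserting that $D$ is nonvanishing on $\overline{U_{2,0}}$ ``from the determinant in (6.8)''. But (6.8) only says $AD-BC\neq 0$; it does not force $D\neq 0$. What is actually available is that $D(0,c_1)=\theta{-r_1\brack r_2}(\varphi_1(P_2)-c_1)\,g(0)$ is nonzero for generic $c_1$. Fixing such a $c_1^*$, the zeros of $D(\cdot,c_1^*)$ in $U_{2,0}$ are isolated; on their complement $I(\cdot,c_1^*)$ is holomorphic and, by your derivative computation at $t=0$, not identically zero, so its zero set is discrete. For the containment $T_n\subset\{I(\cdot,c_1^*)=0\}$ one likewise needs $D(\cdot,c_1^*)\neq 0$ along the integration path, which can be arranged for each $t$ by a generic choice of $c_1$ (and then passed to $c_1^*$ by analytic continuation in $c_1$). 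With this adjustment your argument goes through. The paper sidesteps all of this by never integrating: $t'\mapsto d(t')(c_0+\tilde c(q))-d(t')(c_0)$ is manifestly holomorphic in $t'$, so discreteness of each $T_q$ is immediate.
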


\begin{proof}
Take any $t \in T$. Then there exists $\tilde{c} \in \widetilde{\Gamma }_t \setminus
\widetilde{\Gamma }$. By Lemma 6.2 $\tilde{c} = \tilde{c}(q)$ for some
$q \in {\mathbb Q} \setminus {\mathbb Z}$. Let
\begin{equation*}
T_q := \{ t' \in U_{2,0} ; \tilde{c}(q)\ \text{is a period of $d(t')$}\}.
\end{equation*}
Then $t \in T_q$. However, there exists $t_0 \in U_{2,0}$ such that
$\tilde{c}(q)$ is not a period of $d(t_0)$. Then there exists $c_0 \in {\mathbb C}^2$
such that $d(t_0)(c_0 + \tilde{c}(q)) \not= d(t_0)(c_0)$.
If we set
\begin{equation*}
A := \{ t' \in U_{2,0} ; d(t')(c_0 + \tilde{c}(q)) = d(t')(c_0)\},
\end{equation*}
then $A$ is a discrete subset of $U_{2,0}$. Since $T_q \subset A$, $T_q$ is
also a discrete subset. Hence $T$ is a nowhere dense subset for
$T \subset \bigcup _{q \in {\mathbb Q}\setminus {\mathbb Z}}T_q$.
\end{proof}

For any $\varepsilon _0 > 0$ we can take $\varepsilon $ with
$0 < \varepsilon < \varepsilon _0$ such that there exists
$t_{\varepsilon } \in U_{2,0}$ with $|t_{\varepsilon }| = \varepsilon $ and
$\widetilde{\Gamma }_{t_{\varepsilon }} = \{ 0 \} \times {\mathbb Z}$,
by the above proposition.

\begin{proposition}
Let $t_{\varepsilon } \in U_{2,0}$ be as above. Then, 
$d(t_{\varepsilon }) : {\mathbb C}^2 \setminus N_{t_{\varepsilon }}
\longrightarrow  \Omega $ is a covering space, where 
$\Omega := d(t_{\varepsilon })({\mathbb C}^2 \setminus
N_{t_{\varepsilon }})$.
\end{proposition}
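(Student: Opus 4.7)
The plan is to combine the local biholomorphism supplied by (6.6) with the free $\mathbb{Z}$-translation action from $\widetilde{\Gamma}_{t_\varepsilon}=\{0\}\times\mathbb{Z}$ to produce evenly covered neighborhoods. First, by (6.6) the Jacobian determinant of $d(t_\varepsilon)$ equals $\frac{1}{2\pi\sqrt{-1}}\,\partial H_3(t_\varepsilon;c)/\partial c_2$, which is nonzero exactly on $\mathbb{C}^2\setminus N_{t_\varepsilon}$. Hence $d(t_\varepsilon)$ is a local biholomorphism there, $\Omega$ is open in $\mathbb{C}^2$, and $d(t_\varepsilon)$ is an open continuous surjection onto $\Omega$. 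The translation $\tau_n:(c_1,c_2)\mapsto(c_1,c_2+n)$ preserves $N_{t_\varepsilon}$ and commutes with $d(t_\varepsilon)$, so $d(t_\varepsilon)$ descends through the quotient covering $\pi:\mathbb{C}^2\to\mathbb{C}\times\mathbb{C}^*$ to an induced local biholomorphism $\bar{d}$.

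Next, for each $w_0\in\Omega$ I would analyze the fiber $F_{w_0}:=d(t_\varepsilon)^{-1}(w_0)\cap(\mathbb{C}^2\setminus N_{t_\varepsilon})$. By Lemma 6.2, any two points of $F_{w_0}$ differ by $(0,q)$ with $q\in\mathbb{Q}$. Combined with the discreteness of $F_{w_0}$ in $\mathbb{C}^2$ and its $\mathbb{Z}$-invariance, an infinite family of $\mathbb{Z}$-orbit representatives would accumulate modulo $1$ in the compact group $\mathbb{R}/\mathbb{Z}$, contradicting discreteness. Hence $F_{w_0}=\bigsqcup_{i=1}^{k(w_0)}\{c^{(i)}+(0,n):n\in\mathbb{Z}\}$ for some finite $k(w_0)$. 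Choosing disjoint biholomorphism neighborhoods $U^{(i)}$ of the $c^{(i)}$, shrinking to a common image $W\subset\bigcap_i d(t_\varepsilon)(U^{(i)})$, and extending by $\mathbb{Z}$-translation produces the candidate decomposition $d(t_\varepsilon)^{-1}(W)\cap(\mathbb{C}^2\setminus N_{t_\varepsilon})=\bigsqcup_{i,n}\tau_n(U^{(i)})$.

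The main obstacle is verifying exhaustion: no sequence $c_m\in\mathbb{C}^2\setminus N_{t_\varepsilon}$ with $d(t_\varepsilon)(c_m)\to w_0$ should escape all translates $\tau_n(U^{(i)})$. After $\mathbb{Z}$-translating into $\mathrm{Re}(c_{2,m})\in[0,1]$, the idea is to use the M\"obius-in-$\xi$ form of $h_3$ from (6.7)--(6.8), with $\xi=\mathbf{e}(-c_2)$, to force $\mathrm{Im}(c_{2,m})$ to remain bounded and extract a subsequential limit $c_\infty\in F_{w_0}\cup N_{t_\varepsilon}$. The $F_{w_0}$ case contradicts the escape hypothesis. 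Ruling out $c_\infty\in N_{t_\varepsilon}$ is the subtlest point: such a coincidence would, by Lemma 6.2, require a non-integral rational shift relating a critical preimage to a regular one, and since $\widetilde{\Gamma}_{t_\varepsilon}=\{0\}\times\mathbb{Z}$ precludes such shifts from being global periods, the locus of pathological $w_0$ is a proper analytic subset which one should be able to avoid by shrinking $W$.
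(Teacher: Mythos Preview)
Your setup (local biholomorphism off $N_{t_\varepsilon}$, free $\mathbb{Z}$-action by $\tau_n$) agrees with the paper, but your handling of the exhaustion step has a genuine gap, and the paper takes a different route that avoids it entirely.

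The specific failure is in your properness claim. You want to bound $\mathrm{Im}(c_{2,m})$ by invoking the M\"obius-in-$\xi$ form (6.7) of $h_3$, but the second component of $d(t_\varepsilon)$ involves $H_3(t_\varepsilon;c)=\int_0^{t_\varepsilon}h_3(s;c)\,ds$, not $h_3$ itself, and the M\"obius structure does not transfer to the integral in the way you need. Indeed, as $\mathrm{Im}(c_2)\to+\infty$ one has $\xi=\mathbf{e}(-c_2)\to\infty$ and $h_3(s;c)\to B(s;c_1)/D(s;c_1)$ uniformly for $s$ on the path from $0$ to $t_\varepsilon$ (note $D(0;c_1)\neq 0$ by the matrix preceding (6.8)), so $H_3(t_\varepsilon;c)$ tends to the \emph{finite} limit $\int_0^{t_\varepsilon}B/D\,ds$. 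Hence a bound on $H_3$ yields no upper bound on $\mathrm{Im}(c_2)$, and your compactness argument collapses. The same difficulty undermines your earlier claim that $k(w_0)<\infty$: discreteness of the fiber holds only in $\mathbb{C}^2\setminus N_{t_\varepsilon}$, so an accumulation point in $\mathbb{R}/\mathbb{Z}$ could land on $N_{t_\varepsilon}$ without contradiction. Your final paragraph about ``avoiding a proper analytic subset by shrinking $W$'' is too vague to repair this.

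The paper bypasses all compactness issues with a Baire/thin-set argument showing directly that every fiber is a \emph{single} $\mathbb{Z}$-orbit (so in fact $k(w_0)=1$ always). Given a hypothetical extra preimage $\tilde{v}_0$ of $u_0$ with $\tilde{v}_0-v_0=(0,q)$, $q\in\mathbb{Q}\setminus\mathbb{Z}$ (from Lemma~6.2), one notes that for each $p\in\mathbb{Q}\setminus\mathbb{Z}$ the ``accidental period'' locus $A_p=\{v:d(t_\varepsilon)(v+(0,p))=d(t_\varepsilon)(v)\}$ is a \emph{proper} analytic subset, precisely because $\widetilde{\Gamma}_{t_\varepsilon}=\{0\}\times\mathbb{Z}$. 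The countable union $\bigcup_{p\in\mathbb{Q}\setminus\mathbb{Z}}A_p$ is then thin, so near $\tilde{v}_0$ one can choose $\tilde{v}$ outside it; pulling $d(t_\varepsilon)(\tilde{v})$ back through $(d(t_\varepsilon)|_{V_0})^{-1}$ produces a shift that must lie in $\{0\}\times\mathbb{Z}$ by the choice of $\tilde{v}$, yet cannot by construction---a contradiction. This is the key idea you are missing: exploit the countability of $\mathbb{Q}\setminus\mathbb{Z}$ together with the analytic-set structure of the $A_p$, rather than chase properness.
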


\begin{proof}
Take any $v_0 \in {\mathbb C}^2 \setminus N_{t_{\varepsilon }}$.
We set $u_0 := d(t_{\varepsilon })(v_0)$. There exist a neighbourhood
$V_0$ of $v_0$ and a neighbourhood $U_0$ of $u_0$ such that
$d(t_{\varepsilon })|_{V_0} : V_0 \longrightarrow U_0$ is biholomorphic.
We set $V_k := V_0 + \tilde{c}(k)$ for $k \in {\mathbb Z}$. Since
$\widetilde{\Gamma }_{t_{\varepsilon }} = \{ 0 \} \times {\mathbb Z}$,
$d(t_{\varepsilon })|_{V_k} : V_k \longrightarrow U_0$ is biholomorphic.
\par
It remains to prove that
$d(t_{\varepsilon })^{-1}(U_0) = \bigsqcup _{k \in {\mathbb Z}}V_k$
(disjoint union). It is sufficient to show that for any
$\tilde{v}_0 \in d(t_{\varepsilon })^{-1}(u_0)$ there exists
$k \in {\mathbb Z}$ with $\tilde{v}_0 \in V_k$. Suppose that there exists
$\tilde{v}_0 \in \left( {\mathbb C}^2 \setminus N_{t_{\varepsilon }}\right)
\setminus \left( \bigsqcup _{k \in {\mathbb Z}} V_k\right)$ such that
$d(t_{\varepsilon })(\tilde{v}_0) = u_0$.
Let $\tilde{c} := \tilde{v}_0 - v_0$. Then we have
$d(t_{\varepsilon })(v_0 + \tilde{c}) = d(t_{\varepsilon })(v_0)$. Therefore,
$\tilde{c} = \tilde{c}(q) = (0, q)$ for some $q \in {\mathbb Q}$ by Lemma 6.2.
However, $q \notin {\mathbb Z}$ by the assumption. For any 
$p \in {\mathbb Q} \setminus {\mathbb Z}$ we define
\begin{equation*}
A_p := \{ v \in V_0 ; d(t_{\varepsilon })(v + \tilde{c}(p)) =
d(t_{\varepsilon })(v) \}.
\end{equation*}
Then $A_p$ is an analytic subset of codimension 1.
There exist a neighbourhood $\widetilde{V}_0$ of $\tilde{v}_0$
and a neighbourhood $\widetilde{U}_0$ of $u_0$ with
$\widetilde{U}_0 \subset U_0$ such that
$d(t_{\varepsilon })|_{\widetilde{V}_0} : \widetilde{V}_0 
\longrightarrow \widetilde{U}_0$ is biholomorphic and
$\widetilde{V}_0 \cap V_k = \emptyset $ for any $k \in {\mathbb Z}$.
Since $\bigcup _{p \in {\mathbb Q} \setminus {\mathbb Z}} A_p$
is a thin set, we can take
$\tilde{v} \in \widetilde{V}_0 \setminus \left( \bigcup _{p \in 
{\mathbb Q} \setminus {\mathbb Z}} A_p\right)$. Then, $\tilde{v}$
has the following property:\\
if $d(t_{\varepsilon })(\tilde{v} + \tilde{\tilde{c}}) = d(t_{\varepsilon })
(\tilde{v})$, then $\tilde{\tilde{c}} \in \{ 0 \} \times {\mathbb Z}$.\par
Let $v := \left( d(t_{\varepsilon })|_{V_0}\right)^{-1}\left(
d(t_{\varepsilon })(\tilde{v})\right) \in V_0$.
Then $d(t_{\varepsilon })(v) = d(t_{\varepsilon })(\tilde{v})$.
We set $\tilde{\tilde{c}} := v - \tilde{v}$.
If we take a sufficiently small $\widetilde{V}_0$, then
$\tilde{\tilde{c}} \notin \{ 0 \} \times {\mathbb Z}$. Obviously, we have
$d(t_{\varepsilon })(\tilde{v} + \tilde{\tilde{c}}) = d(t_{\varepsilon })(\tilde{v})$.
Then $\tilde{\tilde{c}}$ must be in $\{ 0 \} \times {\mathbb Z}$ by
the property of $\tilde{v}$. This is a contradiction. Thus we obtain
$d(t_{\varepsilon })^{-1}(U_0) = \bigsqcup _{k \in {\mathbb Z}}V_k$.
\end{proof}

Let $\arg (t_{\varepsilon }) = \theta $ in Proposition 6.4. If we take
a new local coordinate $te^{-\sqrt{-1}\theta }$, then we have
$t_{\varepsilon } = \varepsilon $. Therefore, we may assume that
$d(\varepsilon ) : {\mathbb C}^2 \setminus N_{\varepsilon } \longrightarrow
\Omega := d(\varepsilon ) ({\mathbb C}^2 \setminus N_{\varepsilon })$ 
is a covering space.\par
Take any $u \in \Omega + \kappa (\varepsilon )$. Then, there exists a
neighbourhood $U$ of $u - \kappa (\varepsilon )$ such that
$d(\varepsilon )^{-1}(U) = \bigsqcup _{k \in {\mathbb Z}}V_k$ and
$d(\varepsilon )|_{V_k} : V_k \longrightarrow U$ is biholomorphic.
We define
\begin{equation}
\beta _k(u) := \left( d(\varepsilon )|_{V_k}\right)^{-1}
(u - \kappa (\varepsilon ))
\end{equation}
for any $u \in \Omega + \kappa (\varepsilon )$ and any $k \in {\mathbb Z}$.

\begin{lemma}
Let $\Theta $ be the fundamental function defined in (4.7).
Take any $u \in \Omega + \kappa (\varepsilon )$. 
Then,
$\Theta (u - \beta _k(u)) = 0$ if and only if
$\Theta (u - \beta _{\ell }(u)) = 0$, for any $k, \ell \in {\mathbb Z}$.
\end{lemma}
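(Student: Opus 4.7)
The plan is to reduce the lemma to two elementary observations: that the sheets of the covering $d(\varepsilon)$ are indexed by translations in $\{0\}\times\mathbb{Z}$, and that the fundamental function $\Theta$ is periodic in its second variable with period $1$.

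First I would unwind the definition (6.10). In the proof of Proposition 6.4 the sheets were constructed as $V_k = V_0 + \tilde{c}(k) = V_0 + (0,k)$, and $d(\varepsilon)$ is invariant under translation by $\tilde{c}(k)$ because $\tilde{c}(k) \in \widetilde{\Gamma}_{t_\varepsilon} = \{0\}\times\mathbb{Z}$. Combining these two facts with the biholomorphicity of each $d(\varepsilon)|_{V_k}$ onto the same neighbourhood $U$ of $u-\kappa(\varepsilon)$ forces
\[
\beta_k(u) = \beta_0(u) + (0,k)
\]
for every $u \in \Omega + \kappa(\varepsilon)$ and every $k \in \mathbb{Z}$.

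Next I would check directly from (4.7) that $\Theta$ is invariant under the shift $w \mapsto w - k$, $k \in \mathbb{Z}$: the first summand $\theta {0 \brack 0}(z,\tau)$ does not depend on $w$, and in the second summand the factor satisfies ${\bf e}(w-k) = {\bf e}(w)$ because ${\bf e}(k)=1$. Hence, for any $k \in \mathbb{Z}$,
\[
\Theta\bigl(u-\beta_k(u)\bigr) = \Theta\bigl((u-\beta_0(u)) - (0,k)\bigr) = \Theta\bigl(u-\beta_0(u)\bigr),
\]
and the same identity with $\ell$ in place of $k$. Thus $\Theta(u-\beta_k(u))$ is independent of $k$, and the equivalence of the vanishings follows immediately.

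There is essentially no obstacle here: the statement is a direct compatibility between the deck-transformation description of $d(\varepsilon):{\mathbb C}^2\setminus N_\varepsilon\to\Omega$ (whose deck group was arranged to be exactly $\{0\}\times\mathbb{Z}$ through the careful choice of $t_\varepsilon$ made before Proposition 6.4) and the manifest periodicity of the factor ${\bf e}(w)$ in $\Theta$. The only subtlety worth double-checking is the identification $\beta_k(u) = \beta_0(u)+(0,k)$, which is immediate from the disjoint-union decomposition $d(\varepsilon)^{-1}(U) = \bigsqcup_{k\in\mathbb{Z}} V_k$ established in Proposition 6.4.
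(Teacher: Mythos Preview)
Your proof is correct and follows essentially the same approach as the paper's own proof, which simply notes that $\beta_k(u) - \beta_\ell(u) \in \{0\} \times \mathbb{Z}$ and that $\Theta$ is invariant under translation by $\{0\} \times \mathbb{Z}$. You have supplied the details (the explicit identification $\beta_k(u) = \beta_0(u) + (0,k)$ from the construction of the sheets $V_k$, and the verification of periodicity directly from (4.7)) that the paper leaves implicit.
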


\begin{proof}
By the definition we have $\beta _k(u) - \beta _{\ell }(u) \in \{ 0 \} \times {\mathbb Z}$.
Since $\Theta (u)$ is invariant by $\{ 0 \} \times {\mathbb Z}$,
the statement holds.
\end{proof}

From the above lemma it follows that
\begin{equation*}
Z(\varepsilon ) := \{ u \in \Omega + \kappa (\varepsilon ) ;
\Theta (u - \beta _k(u)) = 0\ \text{for some $k$}\}
\end{equation*}
is well-defined as an analytic subset of $\Omega + \kappa (\varepsilon )$.
We define the image of $(X \setminus S)\setminus \overline{U_2}$ by
$W_1(\varepsilon ) := \varphi ((X \setminus S) \setminus \overline{U_2})$.

\begin{theorem}
The image $W_1(\varepsilon )$ is contained in $Z(\varepsilon )$.
\end{theorem}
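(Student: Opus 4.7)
Fix $P \in (X \setminus S) \setminus \overline{U_2}$ and set $u := \varphi(P) \in A$. The strategy is to run Theorem 5.1 in reverse: I will produce $c \in {\mathbb C}^2 \setminus N_\varepsilon$ such that simultaneously $d(\varepsilon)(c) + \kappa(\varepsilon) \equiv u \mod \Gamma$ and $\Theta(u - c) = 0$. The first condition forces $c = \beta_k(u)$ for some $k \in {\mathbb Z}$, and then the second gives $\Theta(u - \beta_k(u)) = 0$, that is, $u \in Z(\varepsilon)$.

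The natural candidate divisor is $D := P + P_0$, where $P_0 \in X_S^{\circ}$ is the base point, so that $\varphi(D) = u + \varphi(P_0) = u$. I first show that $u \in \Omega + \kappa(\varepsilon)$ modulo $\Gamma$ by combining the Jacobi inversion theorem for singular curves (Theorem 5.19 in \cite{ref2}), which says $\varphi : (X \setminus S)^{(2)} \to A$ is bimeromorphic, with the observation that Theorem 5.1 identifies the map $c \mapsto \varphi(D_c)$ (where $D_c$ is the zero divisor of ${\mathfrak T}_c$ in $X^{\circ}\setminus(\overline{U_1}\cup\overline{U_2})$) with $c \mapsto d(\varepsilon)(c) + \kappa(\varepsilon) \mod \Gamma$. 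Hence the image of the zero-divisor map lies in $\Omega + \kappa(\varepsilon)$, and bimeromorphicity guarantees that the generic divisor, in particular $D = P + P_0$ for $u$ off a proper analytic subset, arises as some $D_c$.

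Once $c = \beta_k(u)$ is selected, applying Theorem 5.1 yields
\[
\varphi(Q_1) + \varphi(Q_2) \equiv u \equiv \varphi(P) + \varphi(P_0) \mod \Gamma
\]
for the zeros $Q_1, Q_2$ of ${\mathfrak T}_c$. The bimeromorphicity of $\varphi$ on $(X \setminus S)^{(2)}$ then forces $Q_1 + Q_2 = P + P_0$ as divisors (on a Zariski-dense subset of $u$'s). In particular, $P$ equals one of the $Q_i$, so $\Theta(u - \beta_k(u)) = {\mathfrak T}_c(P) = 0$ and $u \in Z(\varepsilon)$, as desired.

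The main obstacle is handling the exceptional loci: the branch divisor where $\varphi$ fails to be biholomorphic on $(X \setminus S)^{(2)}$ and the set $N_\varepsilon$ where $d(\varepsilon)$ degenerates. To address this I would first establish the conclusion on a Zariski-dense open subset of $W_1(\varepsilon)$ and then extend by continuity, using that $Z(\varepsilon)$ is a closed analytic subset of $\Omega + \kappa(\varepsilon)$ and that $(X \setminus S) \setminus \overline{U_2}$ is connected, so that the image of the latter meets $Z(\varepsilon)$ on a dense set and therefore lies in $Z(\varepsilon)$ entirely.
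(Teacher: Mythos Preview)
Your proposal is correct and follows essentially the same route as the paper: both arguments use the divisor $P + P_0$, invoke Theorem~5.1 to match $\varphi(E)$ with $\varphi(D)$, appeal to the bimeromorphicity of $\varphi$ on $(X\setminus S)^{(2)}$ to force $E = D$ for generic data, and then pass to the closure. The only cosmetic difference is that the paper perturbs the given point $p_1$ and the base point $P_0$ to nearby general points $\tilde p_1, \tilde p_2$ and then lets $\tilde p_1 \to p_1$, $\tilde p_2 \to P_0$, whereas you phrase the same passage as ``Zariski-dense open set plus closedness of $Z(\varepsilon)$''; these are the same limiting argument.
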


\begin{proof}
Take any $p_1 \in (X \setminus S) \setminus \overline{U_2}$. 
It suffices to show that $\varphi (p_1) \in Z(\varepsilon )$.
Let $\tilde{p}_1$ and $\tilde{p}_2$ be points arbitrarily close to
$p_1$ and $P_0$ respectively. We define a divisor
$D := \tilde{p}_1 + \tilde{p}_2 \in {\rm Div}(X_{{\mathfrak m}})$.
We set $c := \varphi (D) \in \Omega + \kappa (\varepsilon )$.
We may assume that $c$ is a general point. Let $q_1$ and $q_2$
be the zeros of ${\mathfrak T}_{\beta _k(c)}(P) = \Theta (
\varphi (P) - \beta _k(c))$, where $\beta _k(c)$ is defined in (6.10).
If $E := q_1 + q_2$ is the divisor given by $q_1$ and $q_2$, then
we have
\begin{equation*}
\varphi (E) \equiv d(\varepsilon ) (\beta _k(c)) + \kappa (\varepsilon )\ \mod \Gamma
\end{equation*}
by Theorem 5.1. Since
\begin{equation*}
d(\varepsilon )(\beta _k(c)) = d(\varepsilon )\left( \left(d(\varepsilon )|_
{V_k}\right)^{-1}(c - \kappa (\varepsilon ))\right) = c - \kappa (\varepsilon ),
\end{equation*}
we obtain
\begin{equation*}
\varphi (E) \equiv \varphi (D)\ \mod \Gamma .
\end{equation*}
Then $E = D$, for $c$ is general. Hence we have
\begin{equation}
\Theta \left( \varphi (\tilde{p}_1) - \beta _k(\varphi (\tilde{p}_1 ) +
\varphi (\tilde{p}_2)) \right) = 0.
\end{equation}
We note $\varphi (P_0) = (0, 0)$. Letting $\tilde{p}_1 \to p_1$ and
$\tilde{p}_2 \to P_0$ in (6.11), we obtain
\begin{equation*}
\Theta \left( \varphi (p_1) - \beta _k(\varphi (p_1))\right) = 0.
\end{equation*}
Thus we have $\varphi (p_1) \in Z(\varepsilon )$.
\end{proof}

\end{document}